\begin{document}

\newtheorem{example}{Example}

\newtheorem{theorem}{Theorem} \newtheorem{corollary}{Corollary}
\newtheorem{lemma}{Lemma} \newtheorem{proposition}{Proposition}
\newtheorem{definition}{Definition}

\newcommand{\z}{\mathbb{Z}} \newcommand{\re}{\mathbb{R}}
\newcommand{\tn}{\mathbb{T}^N}
\newcommand{\rn}{\mathbb{R}^N}
\newcommand{\espaco}{[0,1]^{\mathbb{N}}}
\newcommand{\supp}{\mbox{supp}}
\newcommand{\nat}{\mathbb{N}}
\newcommand{\Rm}{{\noindent \sc Note. \ }}

\def\cqd {\,  \begin{footnotesize}$\square$\end{footnotesize}}
\def\cqdt {\hspace{5.6in} \begin{footnotesize}$\blacktriangleleft$\end{footnotesize}}
\def\refname{References}
\def\bibname{References}
\def\chaptername{\empty}
\def\figurename{Fig.}
\def\abstractname{Abstract}

\title{An introduction to Coupling}
% Use \titlerunning{Short Title} for an abbreviated version of
% your contribution title if the original one is too long
\author{Artur O. Lopes}
% Use \authorrunning{Short Title} for an abbreviated version of
% your contribution title if the original one is too long \email{alopes@mat.ufrgs.br}}
%
% Use the package "url.sty" to avoid
% problems with special characters
% used in your e-mail or web address
%
\maketitle

\bigskip
\abstract{In this review paper, we describe the use of couplings in several different mathematical problems.
We consider the total variation norm, maximal coupling, and the $\bar{d}$-distance.
We present a detailed proof of a result recently proved: the dual of the Ruelle operator is a contraction with respect to  $1$-Wasserstein distance.
We also show exponential convergence to equilibrium in the state space for finite-state Markov chains when the transition matrix $\mathcal{P}$ has all entries positive.}

In this new  version, we describe in more detail the line of reasoning followed in the work previously published as a chapter in 
``Modeling, Dynamics, Optimization and Bioeconomics II'', Springer Verlag (2017).

\section{Introduction}

This is a review paper presenting some examples that were described in the literature where couplings are used  for deriving results in Ergodic Theory and Probability. We present several simple calculations that we believe can help the beginner  in the area. We are writing for a broad audience and not for the expert.

Our purpose is to present such results in a more direct approach. This will avoid the reader who is interested in the topic from having to look in several different references.

We describe the results in a language that is more familiar to the Dynamical Systems audience. The proofs will be aligned with the point of view of Measure Theory.

In  section \ref{Ru1}, we present some definitions and mention some related results which are required in section \ref{Ru} where we consider the dual of the Ruelle operator.

In section \ref{Max}, we consider the total variation norm and the maximal coupling (see Theorem \ref{uu}). 

In section \ref{pri}, we consider stopping and random  times and their relation to   some special couplings.
Our main result in this section is to show the next result.

Suppose $ \mathcal{P}$ is a d by d line stochastic matrix with entries $P_{ij}>0$, $i,j=1,2,...,d.$ Denote by $0<\rho$ the infimum of the values $P_{ij}>0$, $i,j=1,2,...,d.$
Given an initial vector of probability $\nu= (\nu_1,\nu_2,...,\nu_d)$ one can define a Markov probability $P_\nu$ on $\{1,2,...,d\}^\mathbb{N}.$ This Markov probability describes the probabilities of the associated Markov Stochastic Process $Y_n^\nu$, $n \in \mathbb{N}.$

 If the initial stationary vector of probability $\lambda$ satisfies 
 $\lambda \, \mathcal{P}=\lambda$, then the associated Markov probability $P_\lambda$ is stationary, which means that $P_\lambda$ is invariant for the shift $\sigma:\{1,2,...,d\}^\mathbb{N}\to \{1,2,...,d\}^\mathbb{N}$.
 
 Note that for all $n \in \mathbb{N}$ we get that $P_\lambda (X^\lambda_n = j)= \lambda_j.$

\begin{theorem} \label{pco}
Denote by $Y_n^\nu$, $n \in \mathbb{N}$, the Markov probability associated with the stochastic matrix $\mathcal{P}$, with initial vector of probability $\nu$.
Then, for any $n$ we have
$$ | \lambda - P_\nu (Y_n^\nu \in .) |_{tv} \leq \,2\, (1-\rho)^n,$$
and this describes the speed of convergence to the equilibrium $\lambda$, when time goes to infinity, for the  Markov Process $Y_n^\nu$ .
\end{theorem}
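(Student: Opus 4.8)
The plan is to prove this by a coupling argument, which is the natural tool given the paper's theme. The key idea is to construct two coupled Markov chains that evolve together on the same probability space: one chain $X_n$ started from the stationary distribution $\lambda$, and another chain $Y_n$ started from the arbitrary initial distribution $\nu$. Because $\lambda$ is stationary, the first chain has law $\lambda$ at every time $n$, so $P(X_n = j) = \lambda_j$. The goal is to arrange the coupling so that once the two chains meet (couple), they stay together forever, and then invoke the coupling inequality, which states that for any coupling of two processes,
\begin{equation}
| \lambda - P_\nu(Y_n^\nu \in \cdot) |_{tv} \leq 2\, P(\tau > n),
\end{equation}
where $\tau$ is the coupling (meeting) time. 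This reduces the problem to bounding the tail probability $P(\tau > n)$.

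First I would set up the joint transition mechanism on pairs $(X_n, Y_n) \in \{1,\dots,d\}^2$. The essential observation exploiting the positivity hypothesis is that every column of $\mathcal{P}$ has a common lower bound: since $P_{ij} \geq \rho > 0$ for all $i,j$, one can split each transition step into a "coalescence" part and a "residual" part. Concretely, at each step, with probability $\rho \cdot d$ (I would take care to normalize correctly; the cleanest version uses the fact that $\sum_j \rho \le 1$, so the total coalescence mass is $d\rho \le 1$) the two chains jump together to a common state drawn from the normalized measure $\rho \mathbf{1}/(d\rho)$, and independently of their current positions; with the remaining probability they move according to a residual sub-stochastic kernel. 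The precise constant to track is that at each step, regardless of the current pair $(X_{n-1}, Y_{n-1})$, the probability that the two chains coalesce on that step is at least $d\rho$ — or, in the sharper bookkeeping that yields the stated bound, the probability that they fail to have coalesced is at most $(1-\rho)$ per step.

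Carrying this out, the meeting time $\tau$ is stochastically dominated by a geometric random variable: at each step the chains couple with probability at least $\rho$, independently of the past, so $P(\tau > n) \leq (1-\rho)^n$. Combining this with the coupling inequality above gives
\begin{equation}
| \lambda - P_\nu(Y_n^\nu \in \cdot) |_{tv} \leq 2\, P(\tau > n) \leq 2\,(1-\rho)^n,
\end{equation}
which is exactly the claimed estimate. Since $0 < \rho$ forces $1-\rho < 1$, this establishes exponential convergence to equilibrium as $n \to \infty$.

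I expect the main obstacle to be the careful construction of the coupled kernel so that the per-step coalescence probability is genuinely at least $\rho$ uniformly in the current state, while still having each marginal evolve exactly according to $\mathcal{P}$. The delicate point is verifying that the "residual" kernel one is left with after subtracting the common coalescence mass is still a legitimate (sub-)probability kernel and that the marginal consistency is preserved — i.e., that the coupled process really is a coupling of $X_n$ and $Y_n$. Once this Doeblin-type minorization is in place, the geometric tail bound and the coupling inequality are routine, and the factor of $2$ in the statement is exactly the constant appearing in the standard coupling inequality for the total variation norm established earlier via maximal coupling.
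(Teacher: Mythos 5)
Your argument is correct, but it is a genuinely different coupling from the one the paper uses: what you describe is the classical Doeblin minorization/regeneration coupling, whereas the paper couples by running the two chains \emph{independently} until they first meet. Your key decomposition is of each row, $P_{i\cdot}=\rho\,\mathbf{1}+(P_{i\cdot}-\rho\,\mathbf{1})$, so that at every step, uniformly in the current pair of states, a common regeneration event of probability $d\rho$ sends both chains to the same state (uniformly chosen); the coalescence attempts are then i.i.d.\ over time, the geometric tail $P(\tau>n)\le(1-d\rho)^{n}\le(1-\rho)^{n}$ is immediate, and the coupling inequality of Proposition \ref{oo} (with the factor $2$ coming from the convention $|\cdot|_{tv}=2\sup_A$) finishes the proof. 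The paper's key lemma is different: for the product plan $\tilde P=P_1\otimes P_2$ it shows that two \emph{independent} copies occupy the same state at any fixed time with probability at least $\rho$ (this is where $P_{ij}\ge\rho$ enters, summed over the unknown current positions); but since the events $\{X_k\ne Y_k\}$ are not independent under $\tilde P$, it must run the Markov-property conditioning argument of Proposition \ref{aa} to obtain $\tilde P(T_1>n)\le(1-\rho)^n$, and then build the auxiliary plan $\hat P$ (independent evolution until the first meeting time $T_1$, paths glued together afterwards) so that the coalescence time $T$ appearing in the coupling inequality has the same tail as $T_1$ (Proposition \ref{coli} and \eqref{tamap}). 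Your route buys a much shorter verification --- the marginal consistency you flag as the delicate point is a one-line check from the row decomposition, and no conditioning argument or auxiliary plan is needed --- and it even yields the sharper rate $(1-d\rho)^n$; the paper's route avoids designing any joint transition kernel, at the price of essentially all the technical work of Subsections \ref{eeuu} and \ref{sec}. Two small corrections to your bookkeeping: the per-step coalescence probability in your scheme is $d\rho$, not $\rho$, so the per-step failure probability $1-d\rho$ is \emph{smaller} than $1-\rho$ (your ``sharper bookkeeping'' remark is backwards, though either constant yields the stated bound); and at the initial time the two chains are merely drawn independently from $\lambda$ and $\nu$ and may fail to meet with probability close to one, so the exponent really counts transitions performed --- an off-by-one slack that the paper's own indexing shares.
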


In   sections \ref{eeuu} and \ref{sec},
the proof of the exponential convergence to equilibrium will be completed (showing Theorem \ref{pco}).

In section \ref{dbar}, we consider the $d$-bar distance among Bernoulli probabilities (see Theorem \ref{CC}).

In Section \ref{Ru}, we outline the proof  that  the dual of the Ruelle operator is a contraction for the $1$-Wasserstein distance (definitions in Section \ref{Ru1}); it is basically the same as the one presented  in \cite{KLS},  but with some shortcuts.

The use of coupling and the Wasserstein distance can be useful for estimating decay of correlations (see \cite{BFG2}  \cite{GP} and \cite{Sulku})
and also in other different dynamical and ergodic  problems (see \cite{Kl} and \cite{Aus}).

We believe it is important to describe interesting plans or couplings that can be used in estimations of different natures. Variations of these plans can be helpful to solve  other open problems.
\medskip

According to Frank den Hollander \,\cite{Hol}: {\it coupling is an art, not a recipe}.
\medskip

We would like to thank Anthony Quas,  Diogo Gomes, Adriana Neumann, and Rafael Souza for helpful conversations during the period we were writing this review paper.

\bigskip

\section{Some definitions and the dual of the Ruelle Operator} \label{Ru1}

\begin{definition}
Given the Bernoulli space
$\Omega=\{1,2,...,d\}^\mathbb{N}$,
each probability  $\Gamma$ in $\Omega \times \Omega$ is called a plan.

Given the Bernoulli space
$\Omega=\{1,2,...,d\}^\mathbb{N}$ and two probabilities $\mu$ and $\nu$ on the natural Borel sigma algebra of $\Omega$, a coupling of $\mu$ and $\nu$ is a plan $\Gamma$ on the product space $\Omega \times \Omega$, such that, the first marginal of $\Gamma$ is $\mu$ and the second is $\nu$.
\end{definition}

\medskip

$\mathcal{C} (\mu,\nu)$ by definition is the set of plans $\Gamma$ on $\Omega \times \Omega$ such that the projection in the first coordinate is $\mu$ and in the second is $\nu$. A {\bf  very } particular example is the product probability $\mu \times \nu$.

\begin{definition}
Given a distance $d$ on $\Omega$
we denote
\begin{equation} \label{khl} W_1(\mu,\nu) = \inf_{\Gamma \in \mathcal{C}(\mu,\nu)}  \, \int d(x,y) d\, \Gamma(dx,dy).
\end{equation}

The above  expression defines a metric on the space of probabilities over $\Omega$ which is compatible with the weak$*$-convergence (see \cite{Vi1}). This value is called the $1$-Wasserstein distance of $\mu$ and $\nu$.
\end{definition}

Each plan (it can exist more than one) that realizes the above infimum is called an optimal plan for $d$.

 We denote by $d_1(\mu,\nu)= W_1(\mu,\nu)$ the corresponding metric in the set of probabilities on $\Omega$.

\begin{definition}
More generally, given a continuous function $c:\Omega \times \Omega \to \mathbb{R}$ and fixed $\mu$ and $\nu$, one can be interested in
$$  \inf_{\Gamma \in \mathcal{C}(\mu,\nu)}  \, \int c(x,y) d\, \Gamma(dx,dy).$$

Each plan $\Gamma$ (it can be more than one) that realizes the above infimum is called an optimal plan, or optimal coupling, for $c$ and the pair $\mu$ and $\nu$.
\end{definition}

\medskip

In general is not so easy to identify exactly the optimal plan $\Gamma$. Anyway, if we are lucky enough to find a plan that is almost optimal, then we can get some interesting results. In simple words, this is the main issue on Coupling Theory.

The Kantorovich duality theorem (see \cite{Vi1}) is a main result that also helps to get good estimates in problems of a different nature:
\begin{equation} \label{falt1} d_1( \mu_1,\mu_2)\, = \sup_{\phi:X\to \mathbb{R}\,\,\text{has $d$\, Lipchitz constant}\, \leq 1} \,\{\, \int \phi \, d\,\mu_1 - \int \phi\, d\mu_2\,\}.\end{equation}

If $\mu_n \to \mu$ in weak*-convergence, and one is interested in the speed of convergence to zero  of $d_1(\mu_n,\mu)$, we point out that the equivalent expressions \eqref{khl} and \eqref{falt1} can provide  upper and lower bounds for each $n$ by taking, respectively,  {\it smart} choices on $\Gamma_n$ or $\phi_n$.

The total variation norm (to be defined later) is   a different way to measure the distance between two probabilities. It is not equivalent to weak-$*$-convergence. This will also be considered  in the text.

\medskip

One of our purposes is to illustrate through several examples the use of coupling in interesting problems.

\bigskip

Suppose $\theta<1$ is fixed.
On the Bernoulli space $\Omega=\{1,2,...,d\}^\mathbb{N}$ we consider the metric  $d_\theta$.
By definition $ d_\theta(x,y)= \theta^N $ where $x_1=y_1,..,x_{N-1}=y_{N-1}$ and  $x_{N}\neq y_{N}.$

We briefly mention some properties related to Gibbs states of
Lipchitz potentials (see \cite{PP} for general results).

\begin{definition}
Given $A: \Omega \to \mathbb{R},$
the Ruelle operator $\mathcal{L}_A$  acts on functions $\psi:\Omega \to \mathbb{R} $ in the following way
$$\varphi(x)=\mathcal{L}_A(\psi)(x)= \sum_{a=1}^d e^{A(ax)}\psi(ax).$$

By this we mean $\mathcal{L}_A(\psi)=\varphi.$
\end{definition}

Suppose $A=\log J$ is Lipchitz and normalized, that is, for any $x \in \Omega$ we have $\mathcal{L}_{\log J} (1) (x) =1.$
\bigskip

All probabilities we consider will be over the Borel sigma-algebra $\mathcal{B}$ of $\Omega$.

\begin{definition}
Given the continuous potential $\log J:\Omega\to\mathbb{R}$
let $\mathcal{L}_{\log J}^{*}$ be the operator
on the set of Borel finite measures on $\Omega$ defined so that
$\mathcal{L}_{\log J}^{*}(\nu)$, for each Borel measure $\nu$, satisfies:
 \[
  \int_{\Omega} \psi \ d\mathcal{L}^{*}_{\log J}(\nu)
  =
  \int_{\Omega} \mathcal{L}_{\log J}(\psi)\, d\nu,
 \]
for all continuous functions $\psi$.
\end{definition}

Suppose that $\mathcal{L}_{\log J}(1)=1$, then
$\mathcal{L}^{*}_{\log J}$ takes probabilities in probabilities.
A probability which is fixed for such an operator  $\mathcal{L}^{*}_{\log J}$ is invariant for the shift and called a $g$-measure.
In the case $\log J$ is Holder, such a fixed point is unique and is the equilibrium (Gibbs) state for $\log J$ (see \cite{PP}). We will denoted it by $\mu_{\log J}.$
We point out that given any probability $\rho$ on $\Omega$, we get that weakly* $(\mathcal{L}^{*}_{\log J})^n (\rho) \to \mu_{\log J}$ (see for instance \cite{PP} or \cite{LFT}). Note, however, that for a probability $\rho$ on $\Omega$ which is $\sigma$-invariant, the image  $(\mathcal{L}^{*}_{\log J})^n (\rho)$ is not necessarily $\sigma$-invariant (see \cite{LR}).

Is it true that there exists a metric $d$ equivalent to $d_\theta$ such that $\mathcal{L}_{\log J}^*$ is a contraction in the $1$ Wassertein distance $d_1$ associated to such $d$? The answer to this question is yes (see \cite{KLS}), and we will address the question in the section \ref{Ru}. Before that, we will present some more basic material in the next sections.

M. Stadlbauer presented a proof with an affirmative answer to the above question in a more general setting.

%For continuous time transport on the space of Gibbs probabilities we refer the reader to \cite{KGLM}.

\bigskip

\section{The total variation norm and the maximal coupling} \label{Max}

Suppose $\rho$ is a signed measure in the Borel sigma-algebra $\mathcal{B}$ of $\Omega=\{1,2,..,d\}^\mathbb{N}$ such that $\rho(\Omega)=0$.

\begin{definition}
The total variation of a signed measure $\rho$ as above is, by definition
$$ |\rho|_{tv} =  2\, \sup_{A \in\mathcal{B}} \rho (A).$$
\end{definition}

One can show by duality (see \cite{Vi1}) that
$$ |\rho|_{tv}= \sup_{|\phi|_\infty\leq 1} \int \phi\, d \rho .$$
where $\phi$ are measurable and bounded and $|\phi|_\infty $ is the supremum norm.

Given two probabilites $\mu_1$ and $\mu_2$ in $\Omega$ one can consider the distance
$|\mu_1-\mu_2|_{tv},$ called the total variation distance of $\mu_1$ and $\mu_2$. This is a different way (compared with \eqref{khl}) to measure the distance among two probabilities.

This distance is also known as the strong distance, in opposition to the more well known  concept of weak convergence of probabilities.

Remember that we denote $\mathcal{C} (\mu_1,\mu_2) $ the set plans $\Gamma$ in $\Omega\times \Omega$ such that the projection in the first coordinate is $\mu_1$ and in the second is $\mu_2$.
\medskip

\begin{proposition}
{\bf :} Given $\Gamma \in \mathcal{C} (\mu_1,\mu_2) $, then,
$$ |\mu_1-\mu_2|_{tv}\le 2 \,\,\Gamma\, \{(x,y)\,|\,  x \neq y \} .$$
\end{proposition}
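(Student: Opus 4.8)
The plan is to bound the total variation distance using the characterization $|\mu_1 - \mu_2|_{tv} = 2 \sup_{A \in \mathcal{B}} (\mu_1(A) - \mu_2(A))$, and to exploit the fact that any coupling $\Gamma$ pushes forward to $\mu_1$ and $\mu_2$ under the two coordinate projections. First I would fix an arbitrary Borel set $A \in \mathcal{B}$ and rewrite the difference $\mu_1(A) - \mu_2(A)$ as an integral against $\Gamma$. Since the first marginal of $\Gamma$ is $\mu_1$, we have $\mu_1(A) = \Gamma(A \times \Omega) = \int \mathbf{1}_A(x) \, d\Gamma(x,y)$, and similarly $\mu_2(A) = \Gamma(\Omega \times A) = \int \mathbf{1}_A(y) \, d\Gamma(x,y)$. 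Subtracting gives
$$ \mu_1(A) - \mu_2(A) = \int \big( \mathbf{1}_A(x) - \mathbf{1}_A(y) \big) \, d\Gamma(x,y). $$

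The key observation is that the integrand $\mathbf{1}_A(x) - \mathbf{1}_A(y)$ vanishes on the diagonal set $\{(x,y) : x = y\}$, because there $\mathbf{1}_A(x) = \mathbf{1}_A(y)$. Hence the integral is supported on the off-diagonal set $D = \{(x,y) : x \neq y\}$, and since $|\mathbf{1}_A(x) - \mathbf{1}_A(y)| \leq 1$ everywhere, I can estimate
$$ \mu_1(A) - \mu_2(A) = \int_D \big( \mathbf{1}_A(x) - \mathbf{1}_A(y) \big) \, d\Gamma(x,y) \leq \int_D 1 \, d\Gamma = \Gamma(D). $$
Taking the supremum over all Borel sets $A$ and multiplying by $2$ yields precisely $|\mu_1 - \mu_2|_{tv} \leq 2\, \Gamma\{(x,y) : x \neq y\}$, as claimed.

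There is no serious obstacle here; the argument is essentially a one-line computation once the marginal identities are written down. The only point requiring minor care is measurability: one should confirm that the diagonal $\{x = y\}$ is a Borel subset of $\Omega \times \Omega$, which holds because $\Omega$ is a metric space (with $d_\theta$) and the diagonal is closed, so the integral over $D$ is well defined. I would also remark that this bound holds for every coupling simultaneously, so one may take the infimum over $\Gamma \in \mathcal{C}(\mu_1,\mu_2)$ on the right-hand side; the coupling that makes $\Gamma(D)$ as small as possible is exactly the maximal coupling, which motivates Theorem \ref{uu} and explains why this inequality is in fact an equality for the optimal choice.
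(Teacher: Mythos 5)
Your proof is correct and follows essentially the same route as the paper: both write $\mu_1(A)-\mu_2(A)$ as an integral against $\Gamma$ using the marginal identities, cancel the contribution from the diagonal $\{x=y\}$, bound what remains by $\Gamma\{(x,y)\mid x\neq y\}$, and conclude by taking the supremum over $A$ and multiplying by $2$. Your indicator-function phrasing is just a tidier presentation of the paper's explicit splitting of each integral into the regions $x=y$ and $x\neq y$, and your remarks on measurability of the diagonal and on the connection to the maximal coupling are accurate.
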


{\bf Proof:} Given $A \in \mathcal{B}$ we have that
$$\mu_1(A)-\mu_2(A) = \int \int_{x\in A}\, \Gamma(dx,dy) -  \int_{y\in A} \int\, \Gamma(dx,dy)=$$
$$[\, \int_{x=y} \int_{x \in A}\, \Gamma(dx,dy) + \int_{x\neq y} \int_{x \in A}\, \Gamma(dx,dy)\,] -$$
 $$[\, \int_{y\in A} \int_{x= y}\, \Gamma(dx,dy)+
\int_{y \in A} \int_{x\neq y}\, \Gamma(dx,dy)\,]=$$
$$\int_{x\neq y} \int_{x \in A}\, \Gamma(dx,dy)- \int_{y \in A} \int_{x\neq y}\, \Gamma(dx,dy)\leq \int_{x\neq y} \int_{x \in A}\, \Gamma(dx,dy).  $$

Remember that $$ |\mu_1-\mu_2|_{tv} =  2\, \sup_{A \in\mathcal{B}} (\mu_1 - \mu_ 2) (A).$$

Taking supremum in $A \in \mathcal{B}$ we get the claim. \qed

We follow the general reasoning of \cite{Hol} and \cite{Lin}.

Suppose $\mu_1$ and $\mu_2$ are two different probabilities on the Bernoulli space $\Omega$.

Put $\lambda= \mu_1 + \mu_2,$ and
$$g =\frac{d \mu_1}{d \lambda},\, g' =\frac{d \mu_2}{d \lambda}.$$

Note that $g,g^\prime \leq 1$, because for nontrivial  Borel set $B$ we have that $\lambda (B)\geq\mu_1(B) $ and $\lambda (B)\geq\mu_2(B) $. There exist sets of positive measure  (for $\mu_1$ and $\mu_2$) where the inequality is strict.

Now we define $Q$ on $\Omega$ by
\begin{equation} \label{isto} d\, Q\, = \,(\,g \wedge \,g '\,) \,d \lambda ,\end{equation} where  $\,g \wedge \,g '$ denotes the infimum of $g$ and $g'$.

In this way, if $\lambda(B)=0$ we get that $Q(B)=0.$

Note that by Kantorovich Duality (see \cite{Vi1})
$$ | \mu_1-\mu_2|_{tv} = \sup_{|f|\leq 1,\, f \,\,\text{measurable}\,} | \int f\, d\mu_1 -  \int f\, d\mu_2\,|.$$

Therefore,
$$ | \mu_1-\mu_2|_{tv} = \sup_{|f|\leq 1,\, f \,\,\text{measurable}\,} | \int f\, (g- g') d \lambda\,|=$$
$$ \int_{g \geq g'}  1\, (g- g') d \lambda\,+ \int_{g < g'}  (-1)\, (g- g') d \lambda\, =\int  |\,g- g'\,| d \lambda=$$
\begin{equation} \label{pre1} \int (g -  \,g \wedge \,g '\,)\,d \lambda\,+ \int (g ' -  \,g \wedge \,g '\,)\,d \lambda\, = 2 (1- \int  \,(g \wedge \,g ')\, d \lambda ).
\end{equation}

This shows that 
\begin{equation} \label{qre1}\int  \,(g \wedge \,g ')\, d \lambda<1.
\end{equation}

Consider $\varphi: \Omega \to \Omega \times \Omega $ by $\varphi(x)=(x,x).$

Finally, we denote by $\hat{Q}= \varphi^* (Q).$

Note that the support of $\hat{Q}$ is the diagonal $\Delta$ in $\Omega \times \Omega.$

Now, let $\gamma= \hat{Q} (\Delta)=Q(\Omega)$.

Note that from \eqref{isto} and  \eqref{qre1} we get that $\gamma<1$.

We call $\nu_1= \mu_1 - Q$ and $\nu_2 = \mu_2 - Q$, and finally we define a plan $\pi$ in $\Omega \times \Omega$
by
$$ \pi = \hat{Q} \,+\, \frac{\nu_1 \otimes \nu_2 }{1- \gamma}.$$

This plan is sometimes called {\bf maximal coupling}.
\bigskip

Note that $\nu_2 (\Omega)= \mu_2 (\Omega) - Q(\Omega) = 1 - \gamma= \nu_1 (\Omega)= \mu_1 (\Omega) - Q(\Omega) $.

We claim that $\pi$ projects in the first coordinate on $\mu_1$. Indeed,
$$ \pi (A \times \Omega) = \hat{Q} (A \times \Omega) \,+\, \frac{\nu_1 \otimes \nu_2 }{1- \gamma} (A \times \Omega)=$$
$$   \hat{Q} (A \times A) \,+\, \frac{\nu_1 (A) \otimes \nu_2 (\Omega) }{1- \gamma}= $$
$$Q(A) + \frac{\nu_1 (A)\, (1- \gamma)}{1- \gamma} =  Q(A) + (\mu_1 (A) - Q(A))= \mu_1(A).      $$

The above also shows that $\pi$ is a probability.

Moreover,
$\pi$ projects in the second coordinate on $\mu_2$. Indeed,
$$ \pi (\Omega \times A) = \hat{Q} (\Omega \times A) \,+\, \frac{\nu_1 \otimes \nu_2 }{1- \gamma} (\Omega \times A)=$$
$$   \hat{Q} (A \times A) \,+\, \frac{\nu_1 (\Omega) \otimes \nu_2 (A) }{1- \gamma}=
   $$
   $$Q(A) + \frac{\nu_2 (A)\, (1- \gamma)}{1- \gamma} =  Q(A) + (\mu_2 (A) - Q(A))= \mu_2(A).      $$

\bigskip
In this way $\pi \in \mathcal{C} (\mu_1,\mu_2) $.
Therefore, it follows from a previous result  that for such plan it is true the property $|\mu_1-\mu_2|_{tv}\leq  2 \,\,\pi \,\{(x,y)\,|\,  x \neq y \}.$
\medskip
Now we will show:

\medskip
\begin{theorem} \label{uu} The plan $\pi$ defined above satisfies
$$ |\mu_1-\mu_2|_{tv}=  2 \,\,\pi \,\{(x,y)\,|\,  x \neq y \} .$$
\end{theorem}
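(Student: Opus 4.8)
The plan is to prove the reverse of the inequality supplied by the Proposition, thereby forcing equality. Since we already know $|\mu_1-\mu_2|_{tv}\le 2\,\pi\{(x,y):x\neq y\}$, it suffices to compute $\pi\{(x,y):x\neq y\}$ exactly. I will show $\pi\{(x,y):x\neq y\}=1-\gamma$, which together with the identity $|\mu_1-\mu_2|_{tv}=2(1-\gamma)$ already recorded in \eqref{pre1} yields the theorem immediately.

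First I would split the off-diagonal mass of $\pi$ into its two defining pieces. Because $\hat{Q}=\varphi^*(Q)$ is supported on the diagonal $\Delta$, we have $\hat{Q}\{(x,y):x\neq y\}=0$, so the only contribution comes from $\frac{1}{1-\gamma}\,\nu_1\otimes\nu_2$. Its total mass is $\frac{\nu_1(\Omega)\,\nu_2(\Omega)}{1-\gamma}=\frac{(1-\gamma)^2}{1-\gamma}=1-\gamma$, using the masses computed above. Hence the claim reduces to showing that the product measure $\nu_1\otimes\nu_2$ charges the diagonal $\Delta$ with zero mass.

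The key step is to exploit mutual singularity. Writing out the Radon--Nikodym derivatives, $d\nu_1=(g-g\wedge g')\,d\lambda$ is carried by the set $\{g>g'\}$, whereas $d\nu_2=(g'-g\wedge g')\,d\lambda$ is carried by the disjoint set $\{g'>g\}$. Thus $\nu_1$ and $\nu_2$ live on disjoint subsets of $\Omega$, which forces the product $\nu_1\otimes\nu_2$ to be concentrated on the rectangle $\{g>g'\}\times\{g'>g\}$. Since a diagonal point $(x,x)$ would require $x$ to belong to both disjoint sets simultaneously, this rectangle misses $\Delta$ entirely, so $(\nu_1\otimes\nu_2)(\Delta)=0$ and all the off-diagonal mass equals $1-\gamma$.

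Combining the two pieces gives $\pi\{(x,y):x\neq y\}=0+(1-\gamma)=1-\gamma$, whence $2\,\pi\{(x,y):x\neq y\}=2(1-\gamma)=|\mu_1-\mu_2|_{tv}$, as desired. The only genuinely substantive point is the observation that $\nu_1$ and $\nu_2$ are mutually singular and that this makes the product measure avoid the diagonal; everything else is a tally of the marginal and total-mass identities established before the statement.
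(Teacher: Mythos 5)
Your proof is correct, but it closes the argument by a different mechanism than the paper. The paper never computes $\pi\{(x,y)\,|\,x\neq y\}$ exactly: it only proves the one-sided bound $\pi(\Delta^c)\leq 1-\gamma$, by discarding $\hat{Q}(\Delta^c)=0$ and then crudely bounding the off-diagonal mass of $\frac{\nu_1\otimes\nu_2}{1-\gamma}$ by its total mass $\frac{(1-\gamma)^2}{1-\gamma}=1-\gamma$; equality then comes from sandwiching this against the general coupling inequality $|\mu_1-\mu_2|_{tv}\leq 2\,\pi\{(x,y)\,|\,x\neq y\}$ of the preceding proposition. You instead prove the exact identity $\pi(\Delta^c)=1-\gamma$, and the substantive new ingredient is the mutual singularity of $\nu_1$ and $\nu_2$: since $d\nu_1=(g-g\wedge g')\,d\lambda$ and $d\nu_2=(g'-g\wedge g')\,d\lambda$ are carried by the disjoint sets $\{g>g'\}$ and $\{g'>g\}$, the product $\nu_1\otimes\nu_2$ lives on a rectangle missing the diagonal, so $(\nu_1\otimes\nu_2)(\Delta)=0$. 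This buys you two things the paper's route does not give: the argument is self-contained modulo \eqref{pre1} (once you have the exact value, the prior proposition is logically redundant, even though you invoke it), and it exhibits the finer structure of the maximal coupling, namely that it puts mass exactly $\gamma$ on the diagonal and exactly $1-\gamma$ off it. The paper's version is slightly shorter at that step, since it never needs to identify where $\nu_1$ and $\nu_2$ are supported, but it requires both halves of the sandwich. One cosmetic remark: you frame your computation as "proving the reverse inequality," but what you actually prove is the equality itself, so your opening appeal to the proposition could simply be dropped.
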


{\bf Proof:}
First note that as $|g- g'|= g + g' - 2 \,(g \, \wedge g')$, we have by (\ref{pre1})
$$ |\mu_1-\mu_2|_{tv}=\int |g\,-g' |\, d \,\lambda  = 2 \,[\,1 -\int\,(\, g\,\wedge g '\,)\,d\, \lambda\,]=$$
 $$2\, (1- Q(\Omega))=2\,(1-\gamma)\,\geq \,2\, \pi(\Delta^c)=  2 \,\,\pi \,\{(x,y)\,|\,  x \neq y \} . $$

The last inequality follows from
$$ \pi(\Delta^c)  = \hat{Q}(\Delta^c) +  \frac{\nu_1  \otimes \nu_2 (\Delta^c) }{1- \gamma}\leq$$
$$  \frac{\nu_1  \otimes \nu_2 (\Delta^c) }{1- \gamma}  \leq  \frac{\nu_1  \otimes \nu_2 (\Omega \times \Omega) }{1- \gamma}= \frac{\nu_1 (\Omega) \times \nu_2 (\Omega) }{1- \gamma} \,=\, \frac{(1-\gamma)^2 }{1- \gamma}=1-\gamma.$$ \qed
\medskip

Given a probability $\nu$ on the Bernoulli space $\Omega$ then the probability $\mu= \sigma^* (\nu)$ is by definition the one such that $\mu(A)= \nu (\,\sigma^{-1}(A)\,)$ for any Borel set $A$ on $\Omega$. We say that $\mu$ is invariant for the shift if $\mu= \sigma^* (\mu)$ .

\bigskip

\begin{proposition} Given $\mu_1$ and $\mu_2$ two probabilities over $\Omega$, then
$$ |\sigma^* (\mu_1) - \sigma^* (\mu_2)|_{tv}\, \leq |\mu_1 - \mu_2|_{tv}.$$
\end{proposition}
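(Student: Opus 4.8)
The plan is to reduce the inequality to the definition $|\rho|_{tv}=2\,\sup_{A\in\mathcal{B}}\rho(A)$ and to exploit that the pushforward by $\sigma$ acts on sets only through preimages, which remain inside the Borel $\sigma$-algebra $\mathcal{B}$. First I would set $\rho=\mu_1-\mu_2$, a signed measure with $\rho(\Omega)=0$. Since the pushforward operator $\sigma^*$ is linear, $\sigma^*(\mu_1)-\sigma^*(\mu_2)=\sigma^*(\rho)$, and moreover $\sigma^*(\rho)(\Omega)=\rho(\sigma^{-1}(\Omega))=\rho(\Omega)=0$. Thus $\sigma^*(\rho)$ is again a signed measure of total mass zero, and the definition of $|\cdot|_{tv}$ applies to it verbatim. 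This reduces the statement to showing $|\sigma^*\rho|_{tv}\le|\rho|_{tv}$.

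Next, using $|\sigma^*\rho|_{tv}=2\,\sup_{A\in\mathcal{B}}\sigma^*\rho(A)$ together with the defining relation $\sigma^*\rho(A)=\rho(\sigma^{-1}(A))$, I would observe that since $\sigma$ is Borel measurable, $\sigma^{-1}(A)\in\mathcal{B}$ whenever $A\in\mathcal{B}$. Hence the family $\{\sigma^{-1}(A)\,:\,A\in\mathcal{B}\}$ is merely a subcollection of $\mathcal{B}$, so the supremum taken over this smaller family cannot exceed the supremum over all of $\mathcal{B}$:
$$\sup_{A\in\mathcal{B}}\rho(\sigma^{-1}(A))\,\leq\,\sup_{B\in\mathcal{B}}\rho(B).$$
Multiplying by $2$ gives $|\sigma^*\rho|_{tv}\le|\rho|_{tv}$, which is exactly the claim.

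The only point requiring care — and the nearest thing here to an obstacle — is confirming that $\sigma^*\rho$ is legitimately a signed measure with vanishing total mass, so that the defining formula for $|\cdot|_{tv}$ may be used for it; this is immediate from linearity and $\sigma^{-1}(\Omega)=\Omega$. As an alternative route that avoids even this bookkeeping, one can argue through the dual formula $|\rho|_{tv}=\sup_{|\phi|_\infty\le1}\int\phi\,d\rho$: the change of variables $\int\phi\,d(\sigma^*\rho)=\int(\phi\circ\sigma)\,d\rho$ together with the elementary bound $|\phi\circ\sigma|_\infty\le|\phi|_\infty\le1$ shows that every admissible test function for $\sigma^*\rho$ pulls back to an admissible test function for $\rho$, yielding the same inequality after taking suprema.
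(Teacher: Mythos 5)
Your proof is correct, and your primary argument takes a route that is parallel to, but more elementary than, the paper's. The paper invokes the dual characterization $|\rho|_{tv}=\sup_{|f|_\infty\leq 1}\int f\,d\rho$, pulls each test function back to $f\circ\sigma$, and observes that $\{f\circ\sigma : |f|_\infty\leq 1\}$ is a subclass of all functions bounded by $1$; you instead work straight from the definition $|\rho|_{tv}=2\sup_{A\in\mathcal{B}}\rho(A)$, pull each Borel set back to $\sigma^{-1}(A)$, and observe that $\{\sigma^{-1}(A): A\in\mathcal{B}\}$ is a subfamily of $\mathcal{B}$. The skeleton is the same (pullback produces a smaller family, so the supremum can only decrease), but your set-level version buys self-containedness: it needs no duality theorem, only measurability of $\sigma$ and the linearity of the pushforward, and you correctly handle the one point of bookkeeping the paper glosses over, namely that $\sigma^*\rho$ is again a signed measure of total mass zero so the defining formula for $|\cdot|_{tv}$ applies to it. The alternative route you sketch at the end --- test functions, change of variables $\int \phi\, d(\sigma^*\rho)=\int (\phi\circ\sigma)\, d\rho$, and the bound $|\phi\circ\sigma|_\infty\leq|\phi|_\infty$ --- is precisely the paper's proof, so you have in effect given both arguments.
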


{\bf Proof:}

Note that by Kantorovich Duality (see \cite{Vi1})
$$ |\sigma^* (\mu_1) - \sigma^* (\mu_2)|_{tv}\,= \sup_{|f|\leq 1,\, f \,\,\text{measurable}\,} | \int f\, d \sigma^*\mu_1 -  \int f\, d \sigma^*\mu_2\,|=$$
$$ \sup_{|f|\leq 1,\, f \,\,\text{measurable}\,} | \int (f\circ \sigma )\, d \mu_1 -  \int (f \circ \sigma )\, d \mu_2\,|.$$

The functions of the form $(f\circ \sigma )$ with $|f|\leq 1$ is a smaller class than the set of functions of the form $g$ such that
$|g|\leq 1$.

From this follows the claim. \qed

In this way, the composition with the shift never increase the total variation norm of probabilities.

\medskip

\section{Estimates using the random time $T$} \label{pri}

Remember that points $x$ in $\Omega=\{1,2,...,d\}^\mathbb{N}$ are denoted by $x=(x_1,x_2,x_3,...)$.

\begin{definition} The coupling time $T$ is {\bf the} measurable function  $T:\Omega \times \Omega \to \mathbb{N}$ given by the random time
\begin{equation} \label{kli}T(x,y)= \inf \{n \, |\, x_m= y_m\, \text{for all}\,\, m \geq n\},
\end{equation}
for any $x,y\in \Omega$.
\end{definition}

This value can eventually be $\infty$.

$
T(x,y)=3$ when

$$x= (1,2,2,1,2,1,2,1,2,1,2...)$$
and
$$y=(2,2,1,1,2,1,2,1,2,1,2...).$$

An alternative way to define the  coupling time $T$ is given by
$$T(x,y)= \inf \{n \, |\, \sigma^n(x) = \sigma^n (y)\}, $$
for any $x,y\in \Omega$.

\medskip

Note that there is a difference in estimating the total variation of two probabilities in the state space $\{1,2,..,d\}$ and in the Bernoulli space $\{1,2,...,d\}^\mathbb{N}$. Now, we will consider results for each kind of setting.
\medskip

We will introduce two stochastic processes $X_n$ and $Y_n$, $n \in \mathbb{N},$ taking values in $S=\{1,2,...,d\}$.
We use the notation: for any $n$ consider the measurable function  $X_n$, such that 
$X_n: \{1,2,..,d\}^\mathbb{N} \to\{1,2,..,d\}$ such that $X_n(x)=x_n\in \{1,2,...,d\}$ if $x=(x_1,x_2,x_3,...,x_n,...)$. Consider now another process $Y_n: \{1,2,...,d\}^\mathbb{N} \to\{1,2,...,d\}$ defined in a similar way. For example, given a fixed line stochastic matrix $\mathcal{P}$, consider the Markov Process $X_n$, $n \in \mathbb{N}$, the associated Markov probability $P_1$ in $  \{1,2,...,d\}^\mathbb{N}$, with initial condition $\pi$, and the Markov Process  $Y_n$, $n \in \mathbb{N}$, the associated Markov probability $P_2$ in $  \{1,2,...,d\}^\mathbb{N}$, with initial condition $\nu$. 

\begin{example} \label{jjy} We are primarily  interested  in the case where the probability $\Psi$ in the next proposition is {\bf   the product probability $\Psi= P_1 \otimes P_2$ on $\{1,2,...,d\}^\mathbb{N}\times \{1,2,...,d\}^\mathbb{N}.$ This will be our working example in what follows in the next subsections. Our final goal is to estimate  the decay of $| P_1 ( X_n\in . )- P_2 ( Y_n\in  .) |_{tv}$, when $n \to \infty$. The $\Psi$ in next result is however more general.}
\end{example}

\medskip

\begin{proposition} \label{oo} Estimation in the state space - Suppose $X_n$ $Y_n$, $n \in \mathbb{N}$, are two Stochastic Processes, and $\mu_1^n$ and $\mu_2^n$, $n \in \mathbb{N}$ two probabilities on $\{1,2,...,d\}$. Suppose also that $\Psi$ is a probability on   $\{1,2,..,d\}^\mathbb{N}\times \{1,2,...,d\}^\mathbb{N}$, such that for a fixed  $n$,

for all $J\subset \{1,2...,d\}$, we have
\begin{equation} \label{hum}
\Psi (X_n\in J, Y_n\in \{1,2,...,d\})=\mu_1^n (J)
\end{equation} and
for all $J\subset \{1,2,...,d\}$, we have
\begin{equation} \label{hdo}\Psi ( X_n\in \{1,2,...,d\},Y_n\in J)= \mu_2^n (J) .\end{equation}

Then,
$$  \Psi ( X_n\in J ,Y_n\in S)- \Psi ( X_n\in S,Y_n\in J) \leq$$ 
$$  \Psi \{(x,y)\,|\,  T(x,y)> n \} .$$

Therefore,
\begin{equation} \label{esteaq} | \Psi ( X_n\in . )- \Psi ( Y_n\in  .) |_{tv} \leq 2 \, \Psi \{(x,y)\,|\,  T(x,y)> n \} .
\end{equation}

In other words,
\begin{equation} \label{esteaq91} | \mu_1^n- \mu_2^n |_{tv} \leq 2 \, \Psi \{(x,y)\,|\,  T(x,y)> n \} .
\end{equation}

\smallskip

{\bf Remark:} In the case $\Psi = P_1 \otimes P_2$, we get
for all $J\subset \{1,2...,d\}$, 
\begin{equation} \label{hum09}
\Psi (X_n\in J, Y_n\in \{1,2,...,d\})=P_1 (X_n \in J)= \mu_1^n (J),
\end{equation} and
for all $J\subset \{1,2,...,d\}$, 
\begin{equation} \label{hdo54}\Psi ( X_n\in \{1,2,...,d\},Y_n\in J)= P_2 (Y_n \in J)=\mu_2^n (J) .\end{equation}

\end{proposition}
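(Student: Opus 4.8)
The plan is to reduce the left-hand side to a difference of the two marginals and then to exploit the single structural fact about the coupling time $T$: once the two trajectories have coalesced by time $n$, they must agree at coordinate $n$. First I would use the hypotheses \eqref{hum} and \eqref{hdo} to identify $\Psi(X_n\in J, Y_n\in S)=\mu_1^n(J)$ and $\Psi(X_n\in S,Y_n\in J)=\mu_2^n(J)$, so that the quantity to bound is exactly $\mu_1^n(J)-\mu_2^n(J)$. The reason for keeping the joint form is that I can split each term along the diagonal of $S\times S$. Writing $J^c=S\setminus J$ and decomposing $S=J\cup J^c$ in the free coordinate of each term, the common piece $\Psi(X_n\in J,Y_n\in J)$ cancels, leaving
$$\Psi(X_n\in J, Y_n\in S)-\Psi(X_n\in S,Y_n\in J)=\Psi(X_n\in J, Y_n\in J^c)-\Psi(X_n\in J^c,Y_n\in J).$$

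The key observation comes next. On the event $\{X_n\in J,\,Y_n\in J^c\}$ one has $X_n\neq Y_n$, since $J$ and $J^c$ are disjoint; and because $X_n(x,y)=x_n$ and $Y_n(x,y)=y_n$, the definition \eqref{kli} forces $T(x,y)>n$. Indeed, if $T(x,y)\leq n$ then, $T$ being the minimum of an upward-closed subset of $\mathbb{N}$, one has $x_m=y_m$ for all $m\geq T(x,y)$, and in particular $x_n=y_n$; hence $\{x_n\neq y_n\}\subseteq\{T>n\}$. Dropping the nonnegative subtracted term and using this inclusion gives
$$\Psi(X_n\in J, Y_n\in J^c)-\Psi(X_n\in J^c,Y_n\in J)\leq \Psi(X_n\neq Y_n)\leq \Psi\{(x,y)\,|\,T(x,y)>n\},$$
which is the asserted inequality.

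Finally I would pass to the total variation norm. By the definition $|\mu_1^n-\mu_2^n|_{tv}=2\sup_{J}(\mu_1^n-\mu_2^n)(J)$, taking the supremum over $J\subset S$ in the inequality just proved yields \eqref{esteaq91}; rewriting the marginals back as $\Psi(X_n\in\cdot)$ and $\Psi(Y_n\in\cdot)$ gives \eqref{esteaq}. The only genuinely delicate point is the set inclusion $\{X_n\neq Y_n\}\subseteq\{T>n\}$, which rests on reading the definition of $T$ correctly; everything else is bookkeeping with the marginal hypotheses \eqref{hum} and \eqref{hdo}. It is worth noting that only coalescence at the single coordinate $n$ is used, so the weaker bound $\Psi(X_n\neq Y_n)\leq\Psi(T>n)$ suffices, and the full strength of $T$, namely agreement for all $m\geq n$, is not required for this statement.
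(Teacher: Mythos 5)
Your proof is correct and follows essentially the same route as the paper's: both split the two marginal probabilities so that a common piece cancels (you cancel $\Psi(X_n\in J, Y_n\in J)$, the paper cancels the diagonal contribution $\{x_n=y_n\}$), drop the nonnegative subtracted term, and conclude via the same key inclusion $\{x_n\neq y_n\}\subseteq\{T(x,y)>n\}$. This difference is purely one of bookkeeping, so no further comparison is needed.
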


{\bf Proof:} Given $J \subset \{1,2,...,d\}$

$$ \mu_1^n \,(J) - \mu_2^n \,(J) =$$
$$
\int \int_{x_n\in J}\, \Psi(dx,dy) -  \int_{y_n\in J} \int\, \Psi(dx,dy)=$$
$$[\, \int_{x_n=y_n} \int_{x_n \in J}\, \Psi(dx,dy) + \int_{x_n\neq y_n} \int_{x_n \in J}\, \Psi(dx,dy)\,] -$$
 $$[\, \int_{y_n\in J} \int_{x_n= y_n}\, \Psi(dx,dy)+
\int_{y_n \in J} \int_{x_n\neq y_n}\, \Psi(dx,dy)\,]=$$
$$  \int_{x_n\neq y_n} \int_{x_n \in J}\, \Psi(dx,dy)\,  - \int_{y_n \in J} \int_{x_n\neq y_n}\, \Psi(dx,dy)\leq $$
$$  \int_{x_n\neq y_n} \int_{x_n \in J}\, \Psi(dx,dy)\,\leq   \Psi \{(x,y)\,|\,  T(x,y)> n \} ,$$
because if $x$ and $y$ are such that $x_n\neq y_n$, then, $T(x,y) >n$. \qed

After some work, in the next sections, we will be able to use the estimate of the right-hand side of 
\eqref{esteaq},   showing exponential convergence to zero in $n$, of the sequence
$ | \Psi ( X_n\in . )- \Psi ( Y_n\in  .) |_{tv} $, when $\Psi$ is the product probability $\Gamma =(P_1 \otimes P_2)  $, as mentioned  in Example  \ref{jjy} (see also \eqref{hum09} and \eqref{hdo54}).

We point out that for many plans $\Psi$ we can have that $T=\infty$ almost everywhere. For some special ones, this is not true.

\medskip

\medskip

A more complex result is:

\begin{proposition} Estimation in the Bernoulli space - Suppose $\mu_1$ and $\mu_2$ are probabilities on $\{1,2,..,d\}^\mathbb{N}$. Given $\Psi \in \mathcal{C} (\mu_1,\mu_2) $, then for any $n$,
$$ |\,(\sigma^n)^*(\mu_1) - (\sigma^n)^*(\mu_2)\,|_{tv}\,\leq 2\, \Psi \{(x,y)\,|\,  T(x,y)> n \} .$$
\end{proposition}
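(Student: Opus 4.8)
The plan is to reduce this statement to the Proposition proved earlier in Section~\ref{Max}, namely that for any plan $\Gamma \in \mathcal{C}(\mu_1,\mu_2)$ one has $|\mu_1-\mu_2|_{tv} \leq 2\,\Gamma\{(x,y)\,|\,x \neq y\}$. The only new ingredient is to transport the given coupling $\Psi$ forward by the shift so that it becomes a coupling of the two pushed-forward probabilities, and then to recognize the resulting off-diagonal mass as exactly the event $\{T > n\}$. So the whole argument is: build the right coupling, cite the old Proposition, and do one set identification.

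First I would consider the pushforward $\Psi_n := (\sigma^n \times \sigma^n)^*(\Psi)$, a probability on $\Omega \times \Omega$. A short computation on product sets shows that its marginals are the pushed-forward measures: for any Borel $A$,
$$ \Psi_n (A \times \Omega) = \Psi\big((\sigma^n)^{-1}(A) \times \Omega\big) = \mu_1\big((\sigma^n)^{-1}(A)\big) = (\sigma^n)^*(\mu_1)(A), $$
and symmetrically $\Psi_n(\Omega \times A) = (\sigma^n)^*(\mu_2)(A)$. Hence $\Psi_n \in \mathcal{C}\big((\sigma^n)^*(\mu_1),\,(\sigma^n)^*(\mu_2)\big)$, and the earlier Proposition applies verbatim to give
$$ |\,(\sigma^n)^*(\mu_1) - (\sigma^n)^*(\mu_2)\,|_{tv} \leq 2\,\Psi_n\{(u,v)\,|\,u \neq v\}. $$

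The last step is to rewrite the right-hand side in terms of $T$. By the definition of the pushforward, $\Psi_n\{(u,v)\,|\,u \neq v\} = \Psi\{(x,y)\,|\,\sigma^n(x) \neq \sigma^n(y)\}$. Using the alternative description of the coupling time, $T(x,y)= \inf\{m \,|\, \sigma^m(x) = \sigma^m(y)\}$, I would observe that the set $\{m \,|\, \sigma^m(x) = \sigma^m(y)\}$ is upward closed: if $\sigma^m(x) = \sigma^m(y)$, i.e. $x_k = y_k$ for all $k > m$, then also $\sigma^{m+1}(x) = \sigma^{m+1}(y)$. Consequently $\sigma^n(x) \neq \sigma^n(y)$ holds \emph{precisely} when $T(x,y) > n$, so that $\{(x,y)\,|\,\sigma^n(x) \neq \sigma^n(y)\} = \{(x,y)\,|\,T(x,y) > n\}$. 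Substituting this identity into the inequality above yields the claim.

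This upward-closure observation is the one genuinely nonroutine point; everything else is pushforward bookkeeping. I expect the main obstacle to be stating carefully why the event $\{\sigma^n x \neq \sigma^n y\}$ \emph{coincides} with $\{T > n\}$ rather than being merely contained in it: the containment $\{T>n\}\subseteq\{\sigma^n x\neq\sigma^n y\}$ is immediate, but the reverse inclusion is exactly where the monotonicity of the agreement set is needed, and it is what makes the bound sharp rather than wasteful.
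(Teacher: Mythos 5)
Your proof is correct, but it follows a genuinely different route from the paper's. The paper proves the estimate from scratch: for each Borel set $A$ it writes $(\sigma^n)^*(\mu_1)(A)-(\sigma^n)^*(\mu_2)(A)$ as a difference of $\Psi$-integrals, splits both integrals over the events $\{\sigma^n(x)=\sigma^n(y)\}$ and $\{\sigma^n(x)\neq\sigma^n(y)\}$, cancels the diagonal contributions, drops a nonpositive term, bounds the remainder by $\Psi\{T>n\}$, and takes the supremum over $A$ --- in effect repeating, for the shifted measures, the very computation that proved the coupling inequality of Section \ref{Max}. You instead reuse that earlier proposition as a black box: push $\Psi$ forward by $\sigma^n\times\sigma^n$, verify the marginals, and identify the off-diagonal event of the pushforward with $\{T>n\}$. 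Your reduction is more modular and exposes the structure (the earlier proposition is exactly the case $n=0$, and the general case is its image under the shift), whereas the paper's version is self-contained and needs no pushforward bookkeeping; the mathematical core --- the inclusion $\{\sigma^n(x)\neq\sigma^n(y)\}\subseteq\{T(x,y)>n\}$ --- is the same in both. One correction to your closing comment: that inclusion, the one requiring monotonicity of the agreement set, is what makes the bound \emph{valid}, not what makes it sharp; without it you would only obtain the bound $2\,\Psi\{\sigma^n(x)\neq\sigma^n(y)\}$, which a priori could exceed $2\,\Psi\{T>n\}$, so no conclusion would follow. The immediate inclusion $\{T>n\}\subseteq\{\sigma^n(x)\neq\sigma^n(y)\}$ is the one responsible only for the bound not being wasteful. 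Since you prove that the two events coincide, your argument is complete in any case; note only that this equality uses the paper's ``alternative'' description of $T$ (via $\sigma^m(x)=\sigma^m(y)$), while under the paper's first displayed definition of $T$ (which disagrees with the alternative one by one unit) only the inclusion you actually need survives --- and that suffices.
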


{\bf Proof:}

Remember that if $(x,y)$ is such that for fixed $n$ we have $x_n\neq y_n$, then, $T(x,y) >n.$

Given a set $A\subset \Omega$ in $\mathcal{B},$

$$ \,(\sigma^n)^*(\mu_1)(A) - (\sigma^n)^*(\mu_2)(A) =$$
$$\mu_1\,\{x\,|(\sigma^n) (x) \in A\} - \mu_2\,\{y\,|(\sigma^n) (y) \in A\} \leq$$
$$
\int \int_{\{x \,|\, (\sigma^n) (x) \in A\}}\, \Psi(dx,dy) -  \int_{\{y \,|\,(\sigma^n) (y) \in A\}} \int\, \Psi(dx,dy)=$$
$$[\, \int_{\{(\sigma^n) (x)\neq (\sigma^n) (y)\} } \int_{\{x \,|\, (\sigma^n) (x) \in A\}}  \, \Psi(dx,dy) +$$
 $$\int_{\{ (\sigma^n) (x)= (\sigma^n) (y)\}}  \int_{\{x \,|\, (\sigma^n) (x) \in A\}}\, \Psi(dx,dy)\,] -$$
 $$[\, \int_{\{y \,|\, (\sigma^n) (y) \in A\}} \int_{\{(\sigma^n) (x)\neq (\sigma^n) (y)\}} \, \Psi(dx,dy)+$$
 $$
\int_{\{y \,|\, (\sigma^n) (y) \in A\}}  \int_{(\sigma^n) (x)= (\sigma^n) (y)} \, \Psi(dx,dy)\,]=$$
$$\, \int_{\{(\sigma^n) (x)\neq (\sigma^n) (y) \}} \int_{\{x \,|\, (\sigma^n) (x) \in A\}}  \, \Psi(dx,dy) - $$
$$  \int_{\{y \,|\, (\sigma^n) (y) \in A\}} \int_{\{(\sigma^n) (x)\neq (\sigma^n) (y)\}} \, \Psi(dx,dy)\leq $$
$$\, \int_{\{(\sigma^n) (x)\neq (\sigma^n) (y) \}} \int_{\{x \,|\, (\sigma^n) (x) \in A\}}  \, \Psi(dx,dy) \leq $$

$$ \int \int_{\{(x,y)\,|\, T(x,y) >n\}}  \Psi(dx,dy)\leq \Psi \{(x,y)\,|\,  T(x,y)> n \} .$$

Taking the supremum among all sets $A$ we get the claim. \qed

\bigskip

Suppose $X_n,\, n \in \mathbb{N}$ is a stochastic process over $S$ and $\Omega=S^\mathbb{N}.$
We assume that $X_n: \Omega \to S$ is such that $X_n(w)=w_n$, for any $n \in \mathbb{N}$, where $w=(w_1,w_2,...,w_n,..)\in \Omega$. On $\Omega$ we consider the sigma-algebra $\mathcal{A}$ generated by the cylinder sets (which is the same as the one generated by the open sets).
The stochastic process determines a probability $P$ on the sigma-algebra $\mathcal{A}$ of $\Omega$ (see \cite{Walk} or \cite{Lop}).

A {\bf stopping time} on $\Omega$ is a measurable function $T:\Omega \to \mathbb{N}$, such that, the set $A=\{w\,|\, T(w)=N\}$ depends
only  $X_1,X_2,,..,X_N$. In other words to know if $T(w)=N$ we just have to consider the string $ w_1,w_2,...,w_N.$

The coupling  time $T$ defined by \eqref{kli} is a random time but not a stopping time.
\medskip

We follow Hollander \cite{Hol}: a general program to estimate the  decay rate for the total variation distance of two processes.

Given a plan $\pi$ on $\Omega \times \Omega$, a  {\bf generic random  time} $\mathfrak{T}$ and a non-decreasing function $\psi: \mathbb{N} \to [0,\infty)$,  such that $\lim_{n \to \infty} \psi(n)= \infty$, assume that
$$ \int \psi\,(\, \mathfrak{T}(x,y)\,)\, \pi (dx,dy) <\infty.$$

Note that for any $n$
$$ \psi(n)\, \pi (\mathfrak{T}\,>\,n)\,\leq \int_{ \mathfrak{T}>n} \psi\,(\, \mathfrak{T}(x,y)\,)\, \pi (dx,dy).$$

The right hand side tends to zero when $n \to \infty$ by the dominated convergence theorem because $ \int \psi\,(\, \mathfrak{T}(x,y)\,)\, \pi (dx,dy) <\infty.$

Given $\epsilon$ suppose that $N$ is big enough such that for all $n>N$ we have $ \psi(n)\, \pi (\mathfrak{T}\,>\,n)\leq \epsilon.$

Suppose the plan $\pi$ is in  $\mathcal{C} (\mu_1,\mu_2) $, where  $\mu_1,\mu_2$ are probabilities on $S$.

Then, from the last proposition, we have that for $n$,
 \begin{equation} \label{estetam} |\,(\sigma^n)^*(\mu_1) - (\sigma^n)^*(\mu_2)\,|_{tv}\,\leq 2\, \pi \{(x,y)\,|\,  \mathfrak{T}(x,y)> n \}\leq 2\, \epsilon \frac{1}{ \psi(n)} .
 \end{equation}

Estimates of the form
\begin{equation} \label{esteaq1}| \pi ( X_n\in . )- \pi ( Y_n\in  .) |_{tv}  \leq 2 \, \pi \{(x,y)\,|\,  \mathfrak{T}(x,y)> n \}\leq 2\, \epsilon \frac{1}{ \psi(n)} 
\end{equation}
are also important and interesting.

In this way, one can get an estimation of the speed of convergence of the above difference by means of the random time $\mathfrak{T}$ and $\psi$. This depends on the plan $\pi$ we pick. The main point is the smart guess in choosing   the plan. All of the above also depends on $\psi(n)$ which can be of polynomial type $n^{-\gamma}$, $\gamma>0$, or exponential type $e^{-\lambda n}$, $\lambda>0$, depending of the problem.

The main problem in these kinds of questions is to estimate $\pi(\mathfrak{T}>n),$ where $n \in \mathbb{N}.$ This of course depends on $\pi$ and the random  time $\mathfrak{T}$ (could be a stopping time or not).

\bigskip
\section{Exponential convergence to equilibrium for finite state Markov chains}

\subsection{Estimates using  the stopping time  $T_1$} \label{eeuu}

We want to consider now  the following one: suppose $\mathcal{P}$ is a finite line stochastic matrix with all entries positive and $\lambda$ its unique invariant vector of probability.

Consider a  Markov chain $X_n$ with another initial condition $\nu$ and the same  transition matrix $\mathcal{P}$. We denote  the associated probability by $P$.

We want to show in Subsection \ref{sec} the existence of $0<\rho<1$, such that,
$$ | \lambda - P (X_n \in .) |_{tv} \leq \,2\, (1-\rho)^n.$$

Theorem \ref{pcc} will describe the  speed of convergence (which is exponential) to the equilibrium $\lambda$ when times $n$ go to infinity for the Markov chain $X_n$, $n \in \mathbb{N}.$ This result is the main goal of the next  subsections.

 \bigskip

\begin{definition} The  stopping time $T_1$ is {\bf the} measurable function  $T_1:\Omega \times \Omega \to \mathbb{N}$ given by
$$T_1(x,y)= \inf \{n \, |\, x_n= y_n\, \,\} $$
for any $x,y\in \Omega$.

\end{definition}

This value can eventually be $\infty$. Note that $T_1$ is in fact a stopping time.

Note that this coupling time is different from the other one denoted by $T$. Note also that $T_1(x,y)\leq T(x,y)$ for any $(x,y)$.

$T_1(x,y)=2$ when

$$x= (1,2,2,1,2,1,2,1,2,1,2...)$$
and
$$y=(2,2,1,1,2,1,2,1,2,1,2...).$$

An alternative way to define the  stopping time $T_1$ is given by
$$T_1(x,y)= \inf \{n \, |\, \sigma^n(x) = \sigma^n (y)\}, $$
for any $x,y\in \Omega$.

We point out that given a plan $\pi$ on $\Omega \times \Omega$ we have that
$$\pi ( T_1(x,y)>n  ) =   \pi \{ (x,y)\in \Omega \times \Omega\, |\, x_1\neq y_1,  x_2\neq y_2,..., x_n\neq y_n\}.$$

\bigskip
Now, we consider  two different stochastic processes $(X_n)_{n \in \mathbb{N}} $ and $(Y_n)_{n \in \mathbb{N}} $ taking values on $\{1,2,...,d\}$. In principle, in the general formulation,  we do not require that $X_n$ and $Y_n$ are Markov processes. We assume the initial time is $n=1$.
This is consistent with the notation $x=(x_1,x_2,x_3,...).$

 $\Gamma$ will denote the associated probability on  $\Omega \times \Omega$ describing the joint distribution of $X_n,Y_n$, $n \in \mathbb{N}$.
 That is,
 $$ \Gamma(  (A_1\times B_1)\times ...\times (A_n\times B_n)) =$$
 $$ \Gamma( (X_1,Y_1)\in (A_1,B_1),...,(X_n,Y_n)\in (A_n,B_n)).$$
 
For a fixed $n$ we will estimate $\Gamma \{ (x,y)\in \Omega \times \Omega \,|\, x_n\neq y_n\}$.

$\Gamma$ is {\bf not necessarily independent}.

We assume that
$$\Gamma \{ (x,y)\in \Omega \times \Omega\, | \,x_n= y_n\}\geq \rho>0.$$

In this way $\Gamma \{ (x,y)\in \Omega \times \Omega\, |\, x_n\neq y_n\}\leq (1-\rho)$.

\medskip

We  want to show that under {\bf some special conditions for $\Gamma$ (see \eqref{ee11})}, for any $n$ we have
\begin{equation} \label{ee}  \Gamma \{ (x,y)\in \Omega \times \Omega\, |\, x_1\neq y_1,  x_2\neq y_2,..., x_n\neq y_n\}\leq (1-\rho)^n.
\end{equation}

Consider a $d$ dimensional Stochastic Matrix $\mathcal{P}=(P_{i,j})_{i,j=1,2,...,d}$ with all entries positive and {\bf denote by $\rho$ the minimum value of $P_{i,j}$}. 

Consider two vector of initial probabilities $\lambda=(\lambda_1,...,\lambda_d) $ and $\nu=(\nu_1,...,\nu_d)$. Using the fixed matrix $\mathcal{P}$ they define respectively two different probabilities on $\Omega$ which are denoted by $P_1$ (using the initial vector of probability $\lambda$) and $P_2$ (using the initial vector of probability $\nu$). They generate respectively two different stochastic processes $(X_n)_{n \in \mathbb{N}} $ and $(Y_n)_{n \in \mathbb{N}} $ taking values on $\{1,2,...,d\}$. We assume the initial time is $n=1$.
This is consistent with the notation $x=(x_1,x_2,x_3,...).$

Consider the (plan) product probability  $ P_1 \otimes P_2$ on $\Omega \times \Omega$.
For a fixed $n$ we will estimate $( P_1 \otimes P_2) \{ (x,y)\in \Omega \times \Omega \,|\, x_n\neq y_n\}$ on Proposition \ref{aa}. {\bf More precisely, first we will show \eqref{ee}, when $\Gamma=P_1 \otimes P_2$, but a more general $\Gamma$, as in \eqref{ee1}, will be later considered}.

Note that for any others initial vector of probabilities $\tilde{\lambda}$ and $\tilde{\nu}$ we have
$$P_1 \otimes P_2 \{ (x,y)\in \Omega \times \Omega\, | \,x_n= y_n\}=\sum_{a_n=1}^d P_1 \otimes P_2\{ (x,y)\in \Omega \times \Omega\, |\, x_n= y_n=a_n\}=
$$
$$\sum_{a_n=1}^d\, [\,\sum_{j,a_1,..a_{n-1}=1}^d   \tilde{\lambda}_j   P_{j, a_1} \, P_{a_1,a_2} ...  P_{a_{n-1},a_{n}} \,]\, [\,\sum_{j,a_1,..a_{n-1}=1}^d   \tilde{\nu_j}   P_{j, a_1} \, P_{a_1,a_2} ..  P_{a_{n-1},a_{n}} \,]\geq$$
$$ \sum_{a_n=1}^d\, [\,\sum_{j,a_1,..a_{n-1}=1}^d   \tilde{\lambda}_j   P_{j, a_1} \, P_{a_1,a_2} ...  P_{a_{n-1},a_{n}} \,]\, [\,\sum_{j,a_1,..a_{n-1}=1}^d   \tilde{\nu}_j   P_{j, a_1} \, P_{a_1,a_2} ..  P_{a_{n-2},a_{n-1}}\,\rho \,]=$$
$$ \sum_{a_n=1}^d\, [\,\sum_{j,a_1,..a_{n-1}=1}^d   \tilde{\lambda}_j   P_{j, a_1} \, P_{a_1,a_2} ...  P_{a_{n-1},a_{n}} \,]\, \,\rho =\rho.$$

In this way $P_1 \otimes P_2 \{ (x,y)\in \Omega \times \Omega\, |\, x_n\neq y_n\}\leq (1-\rho)$.  A very important remark is that the above expression does not depend on the initial vector of probability  $\tilde{\lambda}$ and $\tilde{\nu}$. Indeed, just depends on the matrix $\mathcal{P}$.

For the {\bf independent Process} $(X_n,Y_n)$, $n \in \mathbb{N}$ we consider the {\bf  product probability }$ P_1 \otimes P_2$.
The Stochastic Process $(X_n,Y_n)_{n \in \mathbb{N}}$ taking values on $\{1,2,...,d\} \times \{1,2,...,d\}$,   such that
$$ (P_1 \otimes P_2) ( (X_1,Y_1)\in (A_1,B_1),...,(X_n,Y_n)\in (A_n,B_n)) =$$
$$P_1 (A_1\times ...\times A_n\times \{1,2,..,d\}^\mathbb{N})  \, P_2 (B_1\times  ...\times B_n\times \{1,2,..,d\}^\mathbb{N}) =$$
$$ (P_1 \otimes P_2) (\, (A_1\times ...\times A_n\times \{1,2,..,d\}^\mathbb{N})\, \times (B_1\times ...\times B_n\times \{1,2,..,d\}^\mathbb{N}) ,$$
is a Markov chain with a stochastic matrix
\begin{equation} \label{sono} \left(
\begin{array}{cc}
\mathcal{P} & 0\\
0 & \mathcal{P}
\end{array}
\right).
\end{equation}

The initial vector of probability is such that
\begin{equation} \label{ini} (P_1 \otimes P_2) ( X_1=i, Y_1=j)\,=\, \lambda_i\, \nu_j.
\end{equation}

$(P_1 \otimes P_2)$ is Markov probability on the space $ \{1,2,...,d\} \times \{1,2,...,d\}^\mathbb{N}.$
\medskip

Note that the event $X_1\neq Y_1$ is not independent of $X_2\neq Y_2$.

Assuming  the hypothesis  described above  we will show that:

\medskip

\begin{proposition} \label{aa}
\begin{equation} \label{ee10}
(P_1 \otimes P_2) (T_1>n)\,\,\leq\,\, (1-\rho)^n.
\end{equation}

{\bf Remark:} If $\Gamma$ is another probability in  $ \{1,2,...,d\} \times \{1,2,...,d\}^\mathbb{N}$  satisfying 
\begin{equation} \label{ee11}\Gamma  (T_1>n)= (P_1 \otimes P_2) (T_1>n),
\end{equation}
for all $n \in \mathbb{N}$, then
\begin{equation} \label{ee1}
\Gamma  (T_1>n)\,\,\leq\,\, (1-\rho)^n.
\end{equation}
\end{proposition}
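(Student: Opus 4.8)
The plan is to read off the conclusion from the Markov structure of the \emph{product} process $(X_n,Y_n)_{n\in\mathbb{N}}$, which under $P_1\otimes P_2$ is itself a Markov chain on $\{1,\dots,d\}\times\{1,\dots,d\}$ with transition probabilities $P_{(a,b),(c,e)}=P_{a,c}\,P_{b,e}$. The engine of the proof is to upgrade the unconditional one-step estimate already derived in the text to a \emph{conditional} one. The computation preceding the proposition shows that for any pair of current states $(a,b)$,
\[
P_1\otimes P_2\big(x_{n+1}=y_{n+1}\mid X_n=a,\,Y_n=b\big)=\sum_{c=1}^d P_{a,c}\,P_{b,c}\;\ge\;\rho\sum_{c=1}^d P_{b,c}=\rho,
\]
where the inequality uses $P_{a,c}\ge\rho$ for every $a,c$ and the last equality uses that $\mathcal{P}$ is line stochastic. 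The decisive feature is that this lower bound is \textbf{uniform in} $(a,b)$ and independent of $n$, so it survives conditioning on the entire past $\mathcal{F}_{n}=\sigma\big((X_k,Y_k):k\le n\big)$.

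First I would record this in the complementary form $P_1\otimes P_2\big(x_{n+1}\neq y_{n+1}\mid\mathcal{F}_n\big)\le 1-\rho$ almost surely. Then I would argue by induction, writing $\{T_1>n\}=\{T_1>n-1\}\cap\{x_n\neq y_n\}$ and peeling off one factor per transition. Since $\{T_1>n-1\}$ is $\mathcal{F}_{n-1}$-measurable, the tower property gives
\[
P_1\otimes P_2(T_1>n)=\mathbb{E}\big[\mathbf{1}_{\{T_1>n-1\}}\,P_1\otimes P_2(x_n\neq y_n\mid\mathcal{F}_{n-1})\big]\le(1-\rho)\,P_1\otimes P_2(T_1>n-1),
\]
and iterating this recursion down to the initial time yields the geometric bound $(1-\rho)^n$. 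This is exactly the telescoping that is implicit in the displayed calculation before the proposition, only reorganised as a conditional recursion so that a factor $(1-\rho)$ is extracted at each step.

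For the Remark I would simply observe that the estimated quantity $\Gamma(T_1>n)$ depends on the plan $\Gamma$ \emph{only} through the law of the single random variable $T_1$. Hence hypothesis \eqref{ee11} transports the bound verbatim: $\Gamma(T_1>n)=P_1\otimes P_2(T_1>n)\le(1-\rho)^n$. No further property of $\Gamma$ — in particular no independence — is used, which is precisely the point of the more general formulation.

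I expect the main obstacle to be the careful bookkeeping at the initial step of the induction, where $\mathcal{F}_0$ is trivial and the uniform coalescence estimate refers to a transition rather than to the starting distribution; this is the step where the interplay between the number of coordinate constraints in $\{T_1>n\}$ and the number of transitions must be pinned down. Everything downstream is routine: once the uniform conditional bound $P_1\otimes P_2(x_{k}\neq y_{k}\mid\mathcal{F}_{k-1})\le 1-\rho$ is justified for the relevant indices, the exponential decay follows immediately from the telescoping recursion.
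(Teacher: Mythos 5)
Your proposal is correct and is essentially the paper's own argument: the paper extracts one factor $(1-\rho)$ per step by conditioning on $\{X_1\neq Y_1\}$ and restarting the product chain from a new initial distribution via the Markov property (its equation \eqref{ti}), which is exactly your tower-property recursion built on the uniform bound $\sum_{c} P_{a,c}\,P_{b,c}\geq \rho$, and the Remark is dispatched identically (hypothesis \eqref{ee11} transports the bound verbatim). The initial-step bookkeeping you flag is genuinely delicate, but the paper shares the same ambiguity: with $X_1$ distributed according to $\lambda\otimes\nu$ one only has $P_1\otimes P_2(X_1\neq Y_1)\leq 1$, so under that convention the recursion honestly yields $(1-\rho)^{n-1}$; the exponent $n$ in the statement comes from the paper's preceding display, which quietly uses $n$ transition factors to reach the $n$-th coordinate.
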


{\bf Proof:}
Note that
$$   (P_1 \otimes P_2)\{ (x,y)\in \Omega \times \Omega\, |\, x_1\neq y_1,  x_2\neq y_2,..., x_n\neq y_n\}=$$
$$  (P_1 \otimes P_2) \{\, X_1\neq Y_1,  X_2\neq Y_2,..., X_n\neq Y_n\}=$$
$$ \frac{ (P_1 \otimes P_2) \{\, X_1\neq Y_1,  X_2\neq Y_2,..., X_n\neq Y_n\}}{\Gamma \{X_1\neq Y_1\}} \, (P_1 \otimes P_2)\{X_1\neq Y_1\} =$$
 $$  (P_1 \otimes P_2) \{\, X_1\neq Y_1,  X_2\neq Y_2,...,X_{n}\neq Y_{n}\,|\, X_1\neq Y_1\} \, (P_1 \otimes P_2)\{X_1\neq Y_1\}  \leq$$
$$  (P_1 \otimes P_2) \{ X_2\neq Y_2,...,X_{n}\neq Y_{n}\,|\, X_1\neq Y_1\} \, (1-\rho) . $$

\bigskip

We will need in this moment  the following property:
suppose $Z_n$, $n\in \mathbb{N}$,  is a Markov chain taking values in a finite set $E$ with transition matrix $\hat{\mathcal{P}}$. Consider also a certain initial condition $\tilde{\pi}$. This defines a Markov Probability $P$ of the space of paths $E^{\mathbb{N}}.$

Then,
$$ P(Z_2\in A_2,Z_3\in A_3,...Z_n\in A_n\,|\, Z_1\in A_1 )\,=\,$$
$$
\frac{P ( Z_1\in A_1, Z_2\in A_2,Z_3\in A_3,...,Z_n\in A_n  )}{P(Z_1\in A_1)}= $$
$$ \frac{   \sum_{j\in A_1} \,\, \tilde{\pi}_{a_j}\, [\,  \sum_{a_2\in A_2}\,...\,  \sum_{a_n\in A_n}\, \hat{\mathcal{P}}_{a_j,a_2} \,  \hat{\mathcal{P}}_{a_2,a_3}\,...\, \hat{\mathcal{P}}_{a_{n-1},a_n } ] }{ \sum_{j\in A_1} \tilde{\pi}_{a_j}}= $$
\begin{equation} \label{ti}
P_{\gamma}(Z_2\in A_2,Z_3\in A_3,...Z_n\in A_n),
\end{equation}
where $\gamma$ is  an initial vector of probability on $E$, such that, for $r\in A_1$ we have
\begin{equation}\label{ou}
\gamma_r= \frac{\tilde{\pi}_{a_r}}{\sum_{j\in A_1}\tilde{\pi}_{a_j}},
\end{equation}
and for $r$  which is not in $A_1$ we have  and $\gamma_r=0.$

The values $\hat{\mathcal{P}}_{i,j}\geq \rho$, $\forall i,j=1,2,...,d$.

The above property is a particular case of Prop 1.7, page 78 in \cite{RY} for a Markov Process $Z_n$ taking values  on $E$ which says
$$ P_q ( Z_{t+s} \in A\,|\, \mathcal{F}_t) = P_{Z_s} ( Z_t \in A),$$
where $  \mathcal{F}_t$ is the sigma algebra determined by the process from time 1 to $t$ and $q$ is an initial vector of probability on $E$.
This expression is sometimes called the Markov Property.

We will use (\ref{ti}) taking $Z_j=(X_j,Y_j)$, $A_j=\{X_j \neq Y_j\}$ and $E= \{1,2,...,d\} \times \{1,2,...,d\}$.

Therefore,
$$ (P_1 \otimes P_2) \{ (x,y)\in \Omega \times \Omega\, |\, x_1\neq y_1,  x_2\neq y_2,..., x_n\neq y_n\}=$$
$$  (P_1 \otimes P_2) \{ X_2\neq Y_2,...,X_{n}\neq Y_{n}\,|\, X_1\neq Y_1\} \, (1-\rho) . $$
$$  \tilde{\Gamma} \{X_2\neq Y_2,...,X_{n}\neq Y_{n}\} \, (1-\rho), $$
where $\tilde{\Gamma}$ is another plan (another  Markov probability on the space $ \{1,2,..,d\} \times \{1,2,..,d\}^\mathbb{N})$ due to the fact that we changed the initial condition as described in (\ref{ti}).

Now we use the fact that
$\tilde{\Gamma} \{ (x,y)\in \Omega \times \Omega\, |\, x_n\neq y_n\}\leq (1-\rho)$ and we get that
$$  (P_1 \otimes P_2)\{ (x,y)\in \Omega \times \Omega\, |\, x_1\neq y_1,  x_2\neq y_2,..., x_n\neq y_n\}\leq$$
$$  \tilde{\Gamma} \{X_2\neq Y_2,...,X_{n}\neq Y_{n}\} \, (1-\rho)\leq $$
$$  \tilde{\Gamma} \{ X_3\neq Y_3,...,X_{n}\neq Y_{n}\,|\, X_2\neq Y_2\}  (1-\rho)^2.$$

Proceeding by induction, we get \eqref{ee1}. 
\smallskip

The claim of the remark is obvious.

\qed

\bigskip

We want to combine \eqref{ee1} with  the estimate
\eqref{esteaq}  to show exponential convergence to zero in $n$ of the sequence  $| P_1 ( X_n\in . )- P_2 ( Y_n\in  .) |_{tv}  $ (see Subsection \ref{sec}).
\medskip

Consider as    in previous subsections, a $d$ by $d$ stochastic matrix $\mathcal{P}$ with all positive entries and we denote by $X_n$, $n \in \mathbb{N},$ and $Y_n$, $n \in \mathbb{N}$ two independent Markov Processes with values on $\{1,2,...,d\}$ respectively associated to the same matrix $\mathcal{P}$; and $\rho$ as before. We denote $S= \{1,2,...,d\}$.

Consider an initial probability $\lambda$ for $X_n^\lambda$ and $\nu$ for $Y_n^\nu$. This will define probabilities on
$\{1,2,...,d\}^\mathbb{N}$ which we will denote respectively by $P_1$ and $P_2$. We will denote $P_1^j$ the probability we get from the Markov Process defined by the transition matrix $\mathcal{P}$ and the  initial vector of probability $\delta_{j}$, where $j\in\{1,2,...,d\}.$

We consider first a Stochastic Process $V_n$, $n \in \mathbb{N},$ with values on  $S^2=\{1,2...,d\}^2$
of the form $V_n= (X_n^\lambda,Y_n^\nu).$ Our  assumption is that  the correspondent {\bf probability $\tilde{P}$} on $\Omega \times \Omega$  is the product probability $\tilde{P}=P_1 \otimes P_2$. That is, the processes $X_n^\lambda$ and $Y_n^\nu$ are independent. 

We consider another  Stochastic Process $U_n$, $n \in \mathbb{N},$ with values on  $\{1,2...,d\}^2$
of the form $U_n= (X_n,Y_n).$ This will define    {\bf   another probability $\hat{P}$} on $(\{1,2...,d\}^2)^\mathbb{N}$.
In this case, the processes $X_n$ and $Y_n$ are  not independent.

The main issue in this subsection is to define a probability $\hat{P}$ on $(S \times S)^\mathbb{N}$ such that for the probability $\tilde{P}= P_1 \otimes P_2$, it is valid the expression

$$ \, \hat{P} \{(x,y)\,|\,  T> n \} =  \, \tilde{P} \{(x,y)\,|\,  T_1> n \} .$$

Then, as  $\tilde{P}(T_1>n)\,\,\leq\,\, (1-\rho)^n$ from Proposition \ref{aa}, it will follow a similar property  $\tilde{P} \{(x,y)\,|\,  T> n \}\leq\,\, (1-\rho)^n$. The main result will be obtained in Subsection \ref{sec}: using \eqref{esteaq} to show (see \eqref{uff}) that
\begin{equation} \label{uff23}
| P_1( X_n\in . )- P_2 ( Y_n\in  .) |_{tv}  \leq \,2\, (1-\rho)^n,
\end{equation}
which implies, for instance, exponential convergence to equilibrium for a Markov Chain $X_r$, $r \in \mathbb{N}$, taking values in $S=\{1,2,...,d\}$, when all entries of $\mathcal{P}$ are positive.

For a fixed $n$ we have to define $\hat{P}$ is sets of the form
\begin{equation} \label{ou49} \,(\,A_1 \times B_1\,) \times (\,A_2 \times B_2\,)\times...\times (\,A_n \times B_n\,)\,\times (S\times S)^\mathbb{N}, 
\end{equation}
where $A_t,B_t\subset S$, $t=1,2...,n$.

Now, we will indicate how the probability  $\hat{P}$ is defined: suppose $T_1:  \{1,2...,d\}^\mathbb{N}\times \{1,2...,d\}^\mathbb{N}\to \mathbb{N}$ is given by
$$T_1(x,y)= \inf \{n \, |\, x_n= y_n\, \,\} $$
for any $x,y\in \{1,2...,d\}^\mathbb{N} $.

The family of sets $  \mathfrak{B}_k =\{(x,y)| T_1(x,y)=k\}$, $k \in \mathbb{N}$, is a partition of $(S \times S)^\mathbb{N}.$

For fixed $k \in \mathbb{N}$ and $j=1,2,...,d$, consider the set 
$$\mathfrak{B}_k^j=\{ (x,y)\in \mathfrak{B}_k \, \text{and}\,x_k=y_k=j  \}.$$

The family $\mathfrak{B}_k^j$, $j=1,2,...,d$, is a partition of $\mathfrak{B}_k.$

For fixed $k \in \mathbb{N}$ and $j\in S$, consider the set
\begin{equation} \label{entama}
\mathfrak{R}_{k,j} =\{(x,y)\in \mathfrak{B}_k^j, \,\text{and}\,\, (X_m,Y_m)\cap \Delta = \emptyset \,,\,\,
 \text{for some}\,\, k<m \}.
\end{equation}

We denote
\begin{equation} \label{gov} \mathfrak{R} = \cup_{k=1}^\infty \cup_{j=1} ^d\, \mathfrak{R}_{k,j},\, \text{
and we declare}\, \hat{P} (  \mathfrak{R})=0.
\end{equation}

Given $k,j$, it remains to define the probability $\hat{P}$ for subsets $V\subset \mathfrak{B}_k^j \cap \mathfrak{R}_{k,j}^c$.

For fixed $k,j$, we will define $\hat{P}$ for sets $V$ of the form
\begin{equation} \label{entama1}((\,A_1 \times B_1\,) \times (\,A_2 \times B_2\,)\times...\times (\,A_n \times B_n)\,\times (S\times S) ^\mathbb{N}) \cap  \mathfrak{B}_k^j\cap \mathfrak{R}_{k,j}^c.
\end{equation}
They have to be defined in a way to compensate the assumption \eqref{gov}.

For simplification one can assume all the sets  on \eqref{entama1} are of the form 
\begin{equation} \label{lor}A_t=\{a_t\}, B_t=\{b_t\}, t=1,2,...,n.
\end{equation}

\bigskip

I) For subsets   $V\subset \mathfrak{B}_k^j \cap \mathfrak{R}_{k,j}^c$, and $n>k=T_1$,  we declare  that
$$\hat{P} [\,(\,A_1 \times B_1\,) \times (\,A_2 \times B_2\,)\times...\times (\,A_n \times B_n\,)\times  (S\times S)^\mathbb{N}\,]=        $$
\begin{equation}  \label{ou}
\tilde{P} [\,(\,A_1 \times B_1\,) \times...  \times(\,A_{k-1} \times B_{k-1} \,)\times (\{j\} \times S) \times  (\,A_{k+1} \times S\,)\times...\times  (\,A_n \times S\,)\times  (S\times S)^\mathbb{N}\,]       
.
\end{equation}

\begin{example}
A particular example, when  $d=4$, $T_1=k=3$, $j=3$ and $n=5$, and $V\subset \mathfrak{B}_3^3 \cap \mathfrak{R}_{3,3}^c$ is of the form \eqref{lor}
$$\hat{P} [\,(\,(\,1 ,2\,) \times (\,2 , 1 \,)\times (\,3,3\,)\times \,(2,2\,)\times (4,4)\, \times  (S\times S)^\mathbb{N})\,]=        $$
$$\hat{P} [\,(\,1 ,2\,) \times (\,2 , 1 \,)\times (\,3 \times S\,)\times \,(2,S\,)\times (4,S)\, \times  (S\times S)^\mathbb{N})\,]=        $$
$$  P_1 (1  \times 2 \times 3 \times \,2 \times  4 \times S^\mathbb{N}) \, P_2 ( 2 \times 1 \times S \times \,S \times S \times S^\mathbb{N})= $$
\begin{equation}  \label{Val}\tilde{P} [\,(\,1 ,2\,) \times (\,2 , 1\,)\times (3 \times S) \times \, \,(2,S\,)\times (4,S)\times (S\times S)^\mathbb{N}\,].        \end{equation}

Note that from \eqref{gov}
$$\hat{P} [\,(\,(\,1 ,2\,) \times (\,2 , 1 \,)\times (\,3,3\,)\times \,(2,3\,)\times (4,4)\, \times  (S\times S)^\mathbb{N})\,]=  0.      $$

In consonance with \eqref{gov},  we get  for $V\subset \mathfrak{B}_3^3 \cap \mathfrak{R}_{3,3}^c$ of the form, 
$$\hat{P}(S \otimes S \otimes (3 \times S) \otimes( 2\times S)  \otimes (4 \times S)\otimes  (S\times S)^\mathbb{N})=$$
$$ \hat{P}( S \otimes S \otimes(3 \times 3) \otimes( 2\times 2) \otimes (4 \times 4) \otimes  (S\times S)^\mathbb{N}).$$

The above express the property that once the two paths meet together for the first time, they remain together in the future times. It is also consistent with 
$$ \hat{P}( S \otimes S \otimes(3 \times 3) \otimes( 2\times \{1,3,4\}) \otimes (4 \times \{1,2,3\}) \otimes  (S\times S)^\mathbb{N})=0.$$

It follows from \eqref{ou}  that for $V$ in  $\mathfrak{B}_5= \cup_{j=1}^d \mathfrak{B}_5^j$, summing in   all $j=1,2,...,d$, in \eqref{Val}
$$\hat{P} [\,(\,(\,1 ,2\,) \times (\,2 , 1 \,)\times (\,S \times S\,)\times \,(2,2\,)\times (4,4)\, \times  (S\times S)^\mathbb{N})\,]=        $$
$$\hat{P} [\,(\,(\,1 ,2\,) \times (\,2 , 1 \,)\times (\,S \times S\,)\times \,(2 \times S\,)\times (4 \times S)\, \times  (S\times S)^\mathbb{N})\,]=        $$
\begin{equation}  \label{ou97}\tilde{P} [\,(\,1 ,2\,) \times (\,2 , 3 \,)\times (S \times S) \times \,\,(2 \times S\,)\times (4 \times S)\times (S \times S)^\mathbb{N}\,].        \end{equation}

Moreover from the above reasoning, for $V$ in  $\mathfrak{B}_5= \cup_{j=1}^d \mathfrak{B}_5^j$ of the below form, summing in   all $j=1,2,...,d$, 
$$\hat{P} [\, (\,S \times S\,) \times (\,S \times S\,)\times (\,S \times S\,)\times \,(2,2\,)\times (4,4)\, \times  (S\times S)^\mathbb{N})\,]=        $$
$$\hat{P} [\, (\,S \times S\,) \times (\,S \times S\,)\times (\,S \times S\,)\times \,(2\times S\,)\times (4 \times S)\, \times  (S\times S)^\mathbb{N})\,]=        $$
\begin{equation}  \label{ou971}\tilde{P} [\,(\,\,S \times S\,) \times (\,S \times S\,)\times (S \times S) \times \,\,(2 \times S\,)\times (4 \times S)\times (S \times S)^\mathbb{N}\,].        \end{equation}

 $\diamondsuit$ 
\end{example}

In all of the above cases  we get that for $V\subset \mathfrak{B}_k^j \cap \mathfrak{R}_{k,j}^c$
\begin{equation} \label{ou249}\hat{P}(V)=\tilde{P}(V).\end{equation}

\medskip

II) When $k=n$, and $A_n=\{j\}=B_n$, and $T_1=n$, we set
$$\hat{P} [\,(\,A_1 \times B_1\,) \times (\,A_2 \times B_2\,)\times...\times (\,A_n \times B_n\,)\times  (S\times S)^\mathbb{N}\,]=        $$
$$P_1 ( A_1 \times... \times A_{k-1}\times \,{j}\times S^\mathbb{N}) \, \, P_2 ( B_1 \times...  \times B_{k-1} \times \,S\times S^\mathbb{N})\, $$
\begin{equation}  \label{ou32} \tilde{P} ((\,A_1 \times B_1\,) \times (\,A_2 \times B_2\,)\times...\times  (A_{n-1}\times B_{n-1}) \times (\,{j} \times S \,)\times  (S\times S)^\mathbb{N}  ).
\end{equation}

\begin{example}

A particular  example, when $d=4$,  $T_1=k=4$, $j=2$ and $n=4$, and $V\subset \mathfrak{B}_4^2 \cap \mathfrak{R}_{4,2}^c$ is of the form \eqref{lor}
$$\hat{P} [\,(\,1 ,2\,) \times (\,2 , 1 \,)\times (\,3,2\,)\times \,(2,2\,)\times  (S\times S)^\mathbb{N}\,]=        $$
$$ \hat{P} [\,(\,1 ,2\,) \times (\,2 , 1 \,)\times (\,3,2\,)\times \,(\{2\} \times S\,)\times  (S\times S)^\mathbb{N}\,]  $$
$$  P_1 (1  \times 2 \times 3 \times \,2 \times S^\mathbb{N}) \, P_2 ( 2 \times 1 \times 2 \times \,S  \times S^\mathbb{N})= $$
$$(P_1 \otimes P_2)[\,(\,1 ,2\,) \times (\,2 , 1 \,)\times (\,3,2\,)\times \,(2,S\,)\times  (S\times S)^\mathbb{N}\,]=        $$
$$ \tilde{P}[\,(\,1 ,2\,) \times (\,2 , 1 \,)\times (\,3,2\,)\times \,(2,S\,)\times  (S\times S)^\mathbb{N}\,].        $$

Note that
$$\hat{P} [\,(\,(\,1 ,2\,) \times (\,2 , 1 \,)\times (\,3,2\,)\times \,(2,3)\,\times  (S\times S)^\mathbb{N}))\, \,]= 0       $$

$\diamondsuit$ 

\end{example}

\smallskip

In all of the above cases,  when $V\subset \mathfrak{B}_k^j \cap \mathfrak{R}_{k,j}^c$ we get that 
\begin{equation} \label{ou249}\hat{P}(V)=\tilde{P}(V).\end{equation}

\smallskip

III) Now, for subsets   $V \subset \mathfrak{B}_k^j$, where $n<k= T_1(x,y)$,  we declare  that
$$\hat{P} [\,(\,A_1 \times B_1\,) \times (\,A_2 \times B_2\,)\times...\times (\,A_n \times B_n\,)\times  (S\times S)^\mathbb{N}\,]=        $$
\begin{equation}  \label{ou331}\tilde{P} [\,(\,A_1 \times B_1\,) \times ...\times (\,A_n \times B_n\,)\times  (S\times S)^{ k-n-1} \times (\{j\} \times S) \times (S\times S)^\mathbb{N}\,]
\, \, .
\end{equation}

\begin{example}
For example, when $d=4$,   $T_1=k=4$, $j=3$ and $n=2$, and $V\subset \mathfrak{B}_4^3 \cap \mathfrak{R}_{4,3}^c$ is of the form \eqref{lor}
$$\hat{P} [\,(\,1 ,2\,) \times (\,2 , 3 \,)\cap (\mathfrak{B}_4^3 \cap \mathfrak{R}_{4,3}^c)=$$
$$\hat{P} [\,(\,1 ,2\,) \times (\,2 , 3 \,)\times (S \times S) \times \,(3,3\,)\times (S \times S)^\mathbb{N}\,]=        $$
$$\hat{P} [\,(\,1 ,2\,) \times (\,2 , 3 \,)\times (S \times S) \times \,(\{3\} \times S\,)\times (S \times S)^\mathbb{N}\,]=        $$
$$ \tilde{P}  [\,(\,1 ,2\,) \times (\,2 , 3 \,)\times  (S \times S)\times \,(\{3\} \times S \,)\times (S \times S)^\mathbb{N}\,]=$$
$$  P_1 (1  \times 2 \times S \times \,3 \times S^\mathbb{N}) \, P_2 ( 2 \times 3 \times \,S \times S \times S^\mathbb{N})=$$
\begin{equation}  \label{ou87} (P_1 \otimes
P_2) [\,(\,1 ,2\,) \times (\,2 , 3 \,)\times  (S \times S)\times \,(\{3\} \times S \,)\times (S \times S)^\mathbb{N}\,].
\end{equation}

It follows that for $V$ in  $\mathfrak{B}_4= \cup_{j=1}^d \mathfrak{B}_4^j$, summing in   all $j=1,2,...,d$ in \eqref{ou87} 
$$\hat{P} [\,(\,1 ,2\,) \times (\,2 , 3 \,)\times (S \times S) \times \,(S \times S\,)\times (S \times S)^\mathbb{N}\,]=        $$
\begin{equation}  \label{ou97}\tilde{P} [\,(\,1 ,2\,) \times (\,2 , 3 \,)\times (S \times S) \times \,(S \times S\,)\times (S \times S)^\mathbb{N}\,].        \end{equation}

Note that,
$$\hat{P} [\,(\, (\,1 ,2\,) \times (\,2 , 3 \,)\times (S \times S)\times \,(3,2\,)\times (S \times S)^\mathbb{N}\,)\cap\mathfrak{B}_3^4\,]=0        $$

$\diamondsuit$ 

\end{example} 

In all of the above cases, when $V\subset \mathfrak{B}_k^j \cap \mathfrak{R}_{k,j}^c$
\begin{equation} \label{ou249}\hat{P}(V)=\tilde{P}(V).\end{equation}

\medskip

Considering the probability $\hat{P}$ for subsets $V$ (of the form \eqref{ou49}) inside $ \mathfrak{B}_k^j \cap \mathfrak{R}_{k,j}^c$, for different $k, n$, and $j=1,2,...,d$, we were able to define the probability $\hat{P}$ in all  $ (S\times S)^\mathbb{N}.$ For the definition of the probability $\hat{P}$ we use in a fundamental way the stopping time $T_1$.

One can say in an elusive way that $\hat{P}$ describes the following process:  two particles located on $S$ evolve in time in an independent way (a Markov Chain $(X_t,Y_t) $, $t \in \mathbb{N},$ taking values in $S\times S$), and when they meet, they stay together in the future according to the law of the Markov chain in $S$.

\medskip

\begin{proposition} \label{coli} For any $k\in \mathbb{N}$ we get $\hat{P} (T_1 = k)= \tilde{P} (T_1 = k)$.

\end{proposition}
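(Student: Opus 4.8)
The plan is to reduce the statement to a fact about the finitely many ``first-meeting-at-$k$-with-value-$j$'' atoms and then to sum. Since $T_1(x,y)=k$ means precisely that the two coordinates first agree at time $k$, the event $\{T_1=k\}$ coincides with the block $\mathfrak{B}_k$ of the partition, and $\mathfrak{B}_k$ splits as the finite disjoint union $\mathfrak{B}_k=\bigcup_{j=1}^d \mathfrak{B}_k^j$ according to the common value $j$ attained at time $k$. Hence both sides of the claim decompose as $\hat P(T_1=k)=\sum_{j=1}^d \hat P(\mathfrak{B}_k^j)$ and $\tilde P(T_1=k)=\sum_{j=1}^d \tilde P(\mathfrak{B}_k^j)$, and it is enough to prove, for each fixed $k$ and $j$, the identity $\hat P(\mathfrak{B}_k^j)=\tilde P(\mathfrak{B}_k^j)$.

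First I would discard the part of $\mathfrak{B}_k^j$ where the two paths separate again: because $\mathfrak{R}_{k,j}\subset\mathfrak{R}$ and $\hat P(\mathfrak{R})=0$ by \eqref{gov}, we have $\hat P(\mathfrak{B}_k^j)=\hat P(\mathfrak{B}_k^j\cap\mathfrak{R}_{k,j}^c)$. On the remaining set the two trajectories have already coalesced at time $k$ and the $Y$-coordinate is slaved to the $X$-coordinate from time $k$ onward, so a point is completely encoded by the admissible prefix $(a_1,b_1),\dots,(a_{k-1},b_{k-1})$ with $a_t\neq b_t$ together with the common value $j$ at time $k$. I would therefore write $\mathfrak{B}_k^j\cap\mathfrak{R}_{k,j}^c$ as the disjoint union, over all such prefixes, of the cylinders ending in $(\{j\}\times\{j\})$ at the $k$-th slot and carrying the coalesced tail. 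Each of these cylinders is exactly of the type governed by case~II (the case $n=k$, $A_n=\{j\}=B_n$, $T_1=n$), so formula \eqref{ou32} together with the agreement \eqref{ou249} evaluates its $\hat P$-mass as the corresponding $\tilde P$-mass. Summing over the finitely many admissible prefixes reconstitutes $\tilde P(\mathfrak{B}_k^j)$, and then summing over $j=1,\dots,d$ gives the proposition.

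The hard part will be the per-cylinder bookkeeping in the previous step: I must check that the case~II prescription assigns to each coalescing cylinder exactly the independent probability of the prefix times the probability of reaching $j$ at time $k$, with no spurious factor coming from the freed post-meeting coordinates. The conceptual reason this must work is that $\{T_1=k\}$ is measurable with respect to the coordinates up to and including the meeting time, and on those coordinates $\hat P$ and $\tilde P=P_1\otimes P_2$ have identical finite-dimensional distributions; the only difference introduced by $\hat P$ is the collapse of the future onto the diagonal $\Delta$ (encoded by $\hat P(\mathfrak{R})=0$), which merely redistributes mass inside each $\mathfrak{B}_k^j$ and leaves the total $\hat P$-measure of $\mathfrak{B}_k^j$ equal to its $\tilde P$-measure. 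Turning this heuristic into the clean cylinder identity — that the relaxation to $S$ of the $Y$-coordinates after the meeting in \eqref{ou32} does not alter the mass attributed to $\mathfrak{B}_k^j$ — is the technical crux of the argument.
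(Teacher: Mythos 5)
Your route --- split $\{T_1=k\}=\mathfrak{B}_k$ into the blocks $\mathfrak{B}_k^j$, discard $\mathfrak{R}$ via \eqref{gov}, evaluate what remains by the case~II prescription, and sum --- is the same as the paper's, but the step you defer as ``the technical crux'' is not a checkable bookkeeping detail: as you have organized it, it is false. The sets $V$ in your decomposition carry the infinite coalesced-tail condition, so their disjoint union is $\mathfrak{B}_k^j\cap\mathfrak{R}_{k,j}^c$. Under $\tilde P=P_1\otimes P_2$ the two coordinate chains are independent with all transition probabilities positive (hence each $<1$), so after any meeting they separate again almost surely; therefore $\tilde P(\mathfrak{B}_k^j\cap\mathfrak{R}_{k,j}^c)=0$, i.e.\ every one of your sets $V$ is $\tilde P$-null, while their union carries all of the $\hat P$-mass of $\mathfrak{B}_k^j$. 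Consequently the two ingredients you combine are incompatible: the per-set identity $\hat P(V)=\tilde P(V)$ cannot hold for tail-carrying sets (it would force $\hat P(\mathfrak{B}_k^j)=0$), and the final step ``summing over prefixes reconstitutes $\tilde P(\mathfrak{B}_k^j)$'' cannot hold either, since summing $\tilde P$ over $\tilde P$-null sets gives $0$. Your heuristic that the collapse onto the diagonal ``merely redistributes mass inside $\mathfrak{B}_k^j$'' overlooks that it redistributes mass onto a $\tilde P$-null set: $\hat P$ and $\tilde P$ are mutually singular, so no set-by-set equality between them can be asserted at the level of tail events.

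What does work --- and what the paper's proof records in \eqref{ou48} and \eqref{ou483} --- is to pair $\hat P$ of a coalesced set with $\tilde P$ of the corresponding \emph{finite-dimensional} event, dropping on the $\tilde P$ side the tail condition and the post-meeting constraints on the second coordinate. This suffices because $\{T_1=k\}=\mathfrak{B}_k$ is itself a finite-coordinate event: it is the finite disjoint union, over $j$ and over disagreeing prefixes, of the ordinary length-$k$ cylinders $[(a_1,b_1),\dots,(a_{k-1},b_{k-1}),(j,j)]$ with no tail condition, and on these cylinders both measures assign the mass $P_1[a_1\cdots a_{k-1}j]\,P_2[b_1\cdots b_{k-1}j]$ by construction. (For this last point one must read the right-hand side of \eqref{ou32} as keeping the meeting slot equal to $\{j\}$ in \emph{both} coordinates; if $B_k$ is literally relaxed to $S$ there, the cylinder masses differ by a transition-probability factor and $\hat P$ fails to have total mass one.) In short, your closing observation --- that $\{T_1=k\}$ is measurable with respect to the first $k$ coordinates, on which $\hat P$ and $\tilde P$ have identical finite-dimensional distributions --- is itself the proof; the per-coalesced-cylinder equality you proposed as the vehicle for it is exactly the statement of this kind that fails.
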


\begin{proof} For a given fixed $k$ we have to consider different sets (depending on $n$)
\begin{equation} \label{ou489} (\,(\,A_1 \times B_1\,) \times (\,A_2 \times B_2\,)\times...\times (\,A_n \times B_n\,)\,\times (S\times S)^\mathbb{N} \,)\cap \mathfrak{B}_k, 
\end{equation}
where $A_t,B_t\subset S$, $t=1,2...,n$.

In cases I)  and II),
it follows that in $\mathfrak{B}_k= \cup_{j=1}^d \mathfrak{B}_k^j$, summing in   all $j=1,2,...,d$, in a similar way as in \eqref{ou} and  \eqref{ou32}, when $T_1=k \leq n$,
 we get
$$\hat{P} [\,(\,A_1 \times B_1\,) \times (\,A_2 \times B_2\,)\times...\times (\,A_n \times B_n\,)\times  (S\times S)^\mathbb{N})\,\cap \mathfrak{B}_k\,]=  $$
$$ \hat {P} ( \{T_1=k\} \cap (\,(A_1 \times B_1\,) \times ...\times (\,A_n \times B_n\,)  \times (S \times S)^{n-k} \times (S \times S)^\mathbb{N}\,)=$$
\begin{equation} \label{ou48} \tilde {P} ( \{T_1=k\} \cap (A_1 \times B_1\,) \times ...\times (\,A_n \times B_n\,)  \times (S \times S)^{n-k} \times (S \times S)^\mathbb{N}\,).   
\end{equation}

\eqref{ou971} is a particular example of the above.

In case III),
it follows that in $\mathfrak{B}_k= \cup_{j=1}^d \mathfrak{B}_k^j$, summing in   all $j=1,2,...,d$, in a similar way as in  \eqref{ou331}, when $T_1=k > n$,
 we get
 $$\hat{P} [\,(\,A_1 \times B_1\,) \times (\,A_2 \times B_2\,)\times...\times (\,A_n \times B_n\,)\times  (S\times S)^\mathbb{N}\cap \mathfrak{B}_k\,]=  $$
\begin{equation} \label{ou483} \tilde {P} ( \{T_1=k\} \cap (A_1 \times B_1\,) \times ...\times (\,A_n \times B_n\,)  \times (S \times S)^{n-k} \times (S \times S)^\mathbb{N}\,).   
\end{equation}

\eqref{ou97} is a particular example of the above.
\smallskip

The claim follows from \eqref{ou483} and \eqref{ou48}.

\end{proof}

\medskip

We want to show that the hypothesis of Proposition \ref{oo} are true for $\hat{P}$ (and therefore, we will be able to get the consequences obtained in the last subsections, in particular the claim of Proposition \ref{aa}, regarding \eqref{ee1}). The interplay between $T_1$ and $T$ will play an important role.

\medskip

Remember that we denoted $\tilde{P}= P_1 \otimes P_2$.

\medskip

One can show the next proposition, but it is not relevant for our reasoning.

\begin{proposition} \label{teq1}
For fixed $j$and $n$, 
\begin{equation} \label{tama}\hat {P} (X_n=j,  Y_n\in S) = \tilde {P} (X_n=j,  Y_n\in S)= P_1 (X_n=j ) .
\end{equation}
\end{proposition}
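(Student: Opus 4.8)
The plan is to reduce the first quantity to the reference product measure $\tilde{P}=P_1\otimes P_2$ by partitioning along the stopping time $T_1$ and invoking the block identities established in the proof of Proposition \ref{coli}. The last equality is immediate: since $\tilde{P}=P_1\otimes P_2$ and $P_2(Y_n\in S)=1$, we have $\tilde{P}(X_n=j,Y_n\in S)=P_1(X_n=j)\,P_2(Y_n\in S)=P_1(X_n=j)$. Hence the entire content of the statement is the first equality, which amounts to saying that the first-coordinate marginal of $\hat{P}$ coincides with that of $\tilde{P}$, namely $P_1$.

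First I would record the decisive structural feature of the event: $\{X_n=j,\ Y_n\in S\}$ is a cylinder that constrains \emph{only} the first coordinate, all of its second-coordinate windows being the full set $S$. Using the partition $\{\mathfrak{B}_k\}_{k\in\mathbb{N}}$ (valid up to the null set $\{T_1=\infty\}$, whose measure is handled below), I would split
$$\hat{P}(X_n=j,\ Y_n\in S)=\sum_{k=1}^{\infty}\hat{P}\big(\{X_n=j,\ Y_n\in S\}\cap\mathfrak{B}_k\big).$$
For each $k$ I then apply \eqref{ou48} (case $k\le n$) and \eqref{ou483} (case $k>n$). The point that makes these collapse cleanly is precisely that our event places no restriction on the second coordinate: the release of the $Y$-coordinate after the meeting time $T_1$ that is built into the definition of $\hat{P}$ in cases I--III (see \eqref{ou32} and \eqref{ou331}) is vacuous here, so the block identities reduce to $\hat{P}(\{X_n=j\}\cap\mathfrak{B}_k)=\tilde{P}(\{X_n=j\}\cap\mathfrak{B}_k)$ term by term.

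Summing over $k$ then finishes the argument: using that $T_1<\infty$ holds $\tilde{P}$-almost surely, which follows from $\tilde{P}(T_1>n)\le(1-\rho)^n$ in Proposition \ref{aa} and, via Proposition \ref{coli}, also yields $\hat{P}(T_1=\infty)=0$, I obtain
$$\hat{P}(X_n=j,\ Y_n\in S)=\sum_{k=1}^{\infty}\tilde{P}\big(\{X_n=j\}\cap\mathfrak{B}_k\big)=\tilde{P}(X_n=j)=P_1(X_n=j).$$
Conceptually this merely reflects that under $\hat{P}$ the $X$-particle always moves according to the chain $\mathcal{P}$ --- freely before the meeting and dragging $Y$ along afterwards --- so its law is never anything but $P_1$. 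The step I expect to be the genuine obstacle is the term-by-term reduction: one must verify that the $Y$-release is harmless \emph{because} the prescribed event is $\{Y_n\in S\}$ rather than a proper sub-event of $S$. Indeed, for a genuine constraint on $Y_n$ with $k\le n$ the measures $\hat{P}$ and $\tilde{P}$ would differ (under $\hat{P}$ the slaved value $Y_n=X_n$ is forced, whereas under $\tilde{P}$ the two coordinates remain independent), so the argument really does use that the second marginal is unconstrained.
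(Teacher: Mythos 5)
The paper does not actually prove Proposition \ref{teq1}: it states only that ``one can show'' it and that it is not relevant for its main line of reasoning. So there is no proof to compare against, and your argument must stand on its own; it does, at the same level of rigor at which the paper itself operates. Your route --- partition the event $\{X_n=j,\ Y_n\in S\}$ by the sets $\mathfrak{B}_k=\{T_1=k\}$, apply the block identities \eqref{ou48} (case $k\le n$) and \eqref{ou483} (case $k>n$) term by term, dispose of $\{T_1=\infty\}$ via Propositions \ref{aa} and \ref{coli}, then sum and use the trivial factorization $\tilde{P}(X_n=j,\,Y_n\in S)=P_1(X_n=j)P_2(Y_n\in S)$ --- is precisely the natural extension of the paper's own proof of Proposition \ref{coli} from the events $\mathfrak{B}_k$ to the events $\{X_n=j\}\cap\mathfrak{B}_k$, so it fills the gap using only machinery the paper already asserts. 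Your closing caveat is also exactly the right one and is, in fact, the crux: the identities \eqref{ou48} and \eqref{ou483} cannot be taken at face value for arbitrary cylinders $(A_1\times B_1)\times\cdots\times(A_n\times B_n)$, since a cylinder forcing $X_t\neq Y_t$ at some time $t$ with $k<t\le n$ has $\hat{P}$-measure zero inside $\mathfrak{B}_k$ (the paths have already coalesced there), while its $\tilde{P}$-measure inside $\mathfrak{B}_k$ is strictly positive because all entries of $\mathcal{P}$ are positive. Your application is confined to the regime where the identities are genuinely valid, namely $B_t=S$ for every $t$, so that the $Y$-release built into cases I--III of the definition of $\hat{P}$ changes nothing; this is also consistent with the paper's own remark that \eqref{tama765} fails. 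The only step I would make more explicit is that the partition argument needs $\hat{P}(T_1=\infty)=0$, but your derivation of this (from $\hat{P}(T_1=k)=\tilde{P}(T_1=k)$ for all finite $k$ together with $\tilde{P}(T_1>m)\le(1-\rho)^m$) is correct.
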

\bigskip

In a similar way, one can show that:

\begin{proposition} \label{teq3}
For fixed $j$ and $n$, 
\begin{equation} \label{tama7}\hat {P} (X_n\in S,  Y_n=j) \geq \rho .
\end{equation}
\end{proposition}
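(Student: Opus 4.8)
The plan is to reduce the statement to a lower bound on the one-dimensional law of the second coordinate under $\hat{P}$ and then exploit that every transition of the $Y$-process is governed by a single step of $\mathcal{P}$, whose entries are all $\geq\rho$. Since $\{X_n\in S,\,Y_n=j\}=\{Y_n=j\}$, proving the proposition amounts to showing $\hat{P}(Y_n=j)\geq\rho$ for $n\geq 2$ (for $n=1$ the quantity is simply the initial weight $\nu_j$, which is the degenerate case and plays no role in the convergence estimate). The argument runs parallel to the one used for Proposition \ref{teq1}, the only new feature being that the second coordinate is the one modified by the gluing at the meeting time $T_1$.

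First I would condition on the $\sigma$-algebra $\mathcal{F}_{n-1}$ generated by $(X_1,Y_1),\dots,(X_{n-1},Y_{n-1})$ and decompose the space along the partition $\mathfrak{B}_k=\{T_1=k\}$. On the event $\{T_1\leq n-1\}$ the two paths have already met, so by construction $Y_m=X_m$ for every $m\geq T_1$; in particular $Y_n=X_n$, and since the first coordinate always evolves as the Markov chain with matrix $\mathcal{P}$, one gets $\hat{P}(Y_n=j\mid\mathcal{F}_{n-1})=P_{X_{n-1},\,j}\geq\rho$. On the complementary event $\{T_1\geq n\}$ the construction coincides with $\tilde{P}=P_1\otimes P_2$ on the coordinates up to time $n$ (case III), apart from the reassignment of mass at the exact meeting instant (case II); in every one of these sub-cases $Y_n$ is produced from the time-$(n-1)$ state by a single application of $\mathcal{P}$ — either through $Y$'s own transition $P_{Y_{n-1},\,j}$, or, at the gluing step, through the transition $P_{X_{n-1},\,j}$ that $Y$ inherits from $X$. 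Equivalently, because after $T_1$ the futures of $X$ and $Y$ share the same matrix $\mathcal{P}$ and start from the common state $X_{T_1}=Y_{T_1}$, they are exchangeable, so the gluing does not distort the second marginal and in fact $\hat{P}(Y_n=j)=\tilde{P}(Y_n=j)=P_2(Y_n=j)$.

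Granting this, the bound is immediate: for $n\geq 2$ one conditions on time $n-1$ once more to write $\hat{P}(Y_n=j)=\sum_{i=1}^{d}\hat{P}(Y_{n-1}=i)\,P_{i,j}\geq\rho\sum_{i=1}^{d}\hat{P}(Y_{n-1}=i)=\rho$, where I used that $P_{i,j}\geq\rho$ for all $i,j$ (equivalently $(\mathcal{P}^{m})_{i,j}\geq\rho$ for every $m\geq 1$, which follows by induction from $(\mathcal{P}^{m})_{i,j}=\sum_k(\mathcal{P}^{m-1})_{i,k}P_{k,j}\geq\rho$ together with the stochasticity of $\mathcal{P}^{m-1}$). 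The point where care is really needed — and the main obstacle — is the verification that the reassignment of mass performed in case II at the meeting time $T_1$ preserves the $\mathcal{P}$-transition structure of the $Y$-coordinate; this is exactly the place where the hypothesis that $X$ and $Y$ carry the \emph{same} matrix $\mathcal{P}$ is used, and it is handled most cleanly by the exchangeability (swapping) argument indicated above rather than by a direct manipulation of the case-by-case formulas.
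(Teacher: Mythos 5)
There is a genuine problem at the core of your argument. The paper itself offers no written proof of Proposition \ref{teq3} (it is introduced only with ``In a similar way, one can show that:''), but it does state, in the Remark immediately following the proposition, that the identity \eqref{tama765}, i.e.\ $\hat {P} (X_n \in S, Y_n=j) = P_2 (Y_n=j)$, is \emph{not} true in general. Your proof rests precisely on establishing this identity: you argue by exchangeability that ``the gluing does not distort the second marginal and in fact $\hat{P}(Y_n=j)=\tilde{P}(Y_n=j)=P_2(Y_n=j)$,'' and then deduce the bound from the Markov structure of $P_2$. The reason the exchangeability (swapping) argument fails here is that $\hat{P}$ is \emph{not} the symmetric Doeblin coupling it presupposes. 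Look at \eqref{ou} and \eqref{ou32}: at the meeting time $k$ the second-coordinate set $B_k$ is replaced by $S$ --- the factor $P_2(B_1\times\cdots\times B_{k-1}\times\{j\}\times\cdots)$ that the classical coupling would retain is replaced by $P_2(B_1\times\cdots\times B_{k-1}\times S\times\cdots)$ --- and for all times $\geq k$ the glued path is charged according to the first coordinate only (the $A$-sets are kept, the $B$-sets erased). So the meeting state, and the whole trajectory after it, is selected by $X$'s transitions alone, with no weighting by $Y$'s probability of actually being at that state; this biases the law of the second coordinate away from $P_2$. The swap argument is valid for the genuine Doeblin coupling, but that is not the plan $\hat{P}$ the paper constructs, which is exactly why the proposition claims only the inequality $\geq\rho$ rather than equality with $P_2(Y_n=j)$.

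The good news is that your first paragraph already contains a correct route that does not need marginal preservation, and you should have stopped there: conditionally on any admissible history up to time $n-1$, the $\hat{P}$-conditional mass of $\{Y_n=j\}$ is a convex combination of entries $P_{i,j}$, $i\in S$ --- it is $P_{Y_{n-1},j}$ on the event that the paths have not yet met and continue apart, and $P_{X_{n-1},j}$ at the gluing instant and ever after (cases \eqref{ou}, \eqref{ou32}, \eqref{ou331}) --- and every such entry is $\geq\rho$. Integrating over histories gives $\hat{P}(Y_n=j)\geq\rho$ for $n\geq 2$ directly, which mirrors the unproved Proposition \ref{teq1} and is presumably the ``similar way'' the paper has in mind. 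Your caveat about $n=1$ (where the value is the initial weight, which may be smaller than $\rho$) is a legitimate observation that the paper glosses over. But as written, your proof routes through a claim the paper explicitly flags as false, so it cannot stand as a proof of the proposition for the paper's $\hat{P}$.
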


\medskip

{\bf Remark:} Note that it is not true that 
for all $j$ and $n$, 
\begin{equation} \label{tama765}\hat {P} (X_n \in S,  Y_n=j) = P_2 (Y_n=j  ) .
\end{equation}

\medskip

In order to take advantage of \eqref{ee1}  (see hypothesis \eqref{ee11}), when $\Gamma=\hat{P}$, 
we will need  the following result:

\bigskip

\begin{proposition}

The probability $\hat{P}$ on $(\{1,2...,d\}^2)^\mathbb{N}$ which is a plan on the product space
$\{1,2,...,d\}^\mathbb{N}\times \{1,2,...,d\}^\mathbb{N}$, for fixed $m$ satisfies
\begin{equation} \label{tamap}
\hat{P} (T > m) = \hat{P} (T_1 > m)=  \tilde{P} (T_1 > m),
\end{equation}

where $\tilde{P}=P_1 \otimes P_2$ is the product probability.

\end{proposition}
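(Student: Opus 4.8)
The plan is to split the chain of equalities at its middle term and handle the two halves by quite different means. The right-hand identity $\hat P(T_1>m)=\tilde P(T_1>m)$ is essentially already in hand: Proposition \ref{coli} gives $\hat P(T_1=k)=\tilde P(T_1=k)$ for every $k\in\mathbb{N}$, so summing the finitely many terms with $k\le m$ yields
$$\hat P(T_1>m)=1-\sum_{k=1}^m\hat P(T_1=k)=1-\sum_{k=1}^m\tilde P(T_1=k)=\tilde P(T_1>m).$$
Since only finitely many values of $k$ enter, this is unaffected by the possibility $T_1=\infty$, and no further work is needed here.

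The substance lies in the left-hand identity $\hat P(T>m)=\hat P(T_1>m)$, and the whole construction of $\hat P$ was arranged to make it hold. Because $T_1(x,y)\le T(x,y)$ for every pair (as noted just after the definition of $T_1$), one direction, $\hat P(T>m)\ge\hat P(T_1>m)$, is automatic. For the reverse I would exploit the declaration \eqref{gov}, namely $\hat P(\mathfrak{R})=0$. Recalling \eqref{entama}, the set $\mathfrak{R}=\bigcup_{k,j}\mathfrak{R}_{k,j}$ consists exactly of those pairs $(x,y)$ that first agree at time $T_1=k$ but then disagree at some later time $m>k$. Thus on the complement $\mathfrak{R}^c$ the coupling is \emph{sticky}: once the two coordinate sequences coincide they coincide at every subsequent time.

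The remaining step is to convert stickiness into the pointwise identity $T=T_1$ on $\mathfrak{R}^c$. If $T_1(x,y)=k<\infty$ and $(x,y)\notin\mathfrak{R}$, then $x_m=y_m$ for all $m\ge k$, and by the defining property of $T$ as the least time from which the two sequences agree forever, this forces $T(x,y)=k=T_1(x,y)$; meanwhile on $\{T_1=\infty\}$ the paths never meet and $T=T_1=\infty$ trivially. Hence $T=T_1$ holds $\hat P$-almost everywhere, so the events $\{T>m\}$ and $\{T_1>m\}$ differ by a $\hat P$-null set, giving $\hat P(T>m)=\hat P(T_1>m)$ and completing the chain.

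The main obstacle is conceptual rather than computational: one must read off from \eqref{entama}--\eqref{gov} that $\mathfrak{R}$ genuinely catches \emph{every} way in which two paths can separate after meeting, so that assigning it zero mass really does identify $T$ with $T_1$. Once this sticky-coupling property is made precise, both equalities follow at once, the first from the construction of $\hat P$ and the second from Proposition \ref{coli}.
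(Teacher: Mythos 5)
Your proposal is correct and follows essentially the same route as the paper: both establish $\hat P(T>m)=\hat P(T_1>m)$ by observing that, since $\hat P(\mathfrak{R})=0$, the identity $T=T_1$ holds $\hat P$-almost everywhere, and both obtain $\hat P(T_1>m)=\tilde P(T_1>m)$ from Proposition \ref{coli} by summation over the values of $T_1$. Your version is in fact slightly more careful on two points the paper glosses over --- you invoke the inequality $T_1\le T$ explicitly and you sum over $k\le m$ and pass to complements, which avoids any issue with possible mass at $\{T_1=\infty\}$ --- but these are refinements of the same argument, not a different one.
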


{\bf Proof:} We claim that $\hat{P} (T > m) = \hat{P} (T_1 > m)$

We consider the partition of $\{1,2,...,d\}^\mathbb{N}\times \{1,2,...,d\}^\mathbb{N}$ in the sets $\mathfrak{B}_r$ and $\mathfrak{B}_r^c$.

It follows from \eqref{entama} and \eqref{gov} that (up to $\hat{P}$   measure zero),  for
 $(x,y) \in (S \times S)^\mathbb{N}$
$$ \, T_1(x,y) =r  \Longleftrightarrow T (x,y) =r .$$

Then, 
$$\hat{P}( \{T > m\}) =\hat{P}( \{T > m\} \cap  \mathfrak{R}^c)=  \hat{P} (\{T_1 > m\} \cap  \mathfrak{R}^c)= \hat{P} (\{T_1 > m\} .$$

Finally,  from Proposition \ref{coli}, for any $k$ we get $\hat{P} (T_1 = k)= \tilde{P} (T_1 = k)$. Then, 
 $$\hat{P} (T_1 > m)= \sum_{k >m} \hat{P} (T_1 = k) =\sum_{k >m} \tilde{P} (T_1 = k) =\tilde{P} (T_1>n  ).$$

 \qed

\subsection{Speed estimation on total variation} \label{sec}

The probabilities $P_1$ and $P_2$ on $\{1,2,...,d\}^\mathbb{N}$ and $X_n, Y_n$, $n \in \mathbb{N}$, were described above. We want to take advantage of (\ref{esteaq}),
(\ref{ee1}) and (\ref{tama}).

Given $\Psi$, 
we get before estimates of the form (\ref{esteaq}) for $T$ satisfying expression (\ref{kli}), that is, for $X_n$ and $Y_n$ independent
$$ | \Psi ( X_n\in . )- \Psi ( Y_n\in  .) |_{tv} \leq 2 \, \Psi \{(x,y)\,|\,  T(x,y)> n \},$$
when considering the random time
$$T(x,y)= \inf \{n \, |\, x_m= y_m\, \text{for all}\, m \geq n\}, $$

and where $\Psi$ satisfies the hypothesis \eqref{hum09} and \eqref{hdo54} of Proposition \ref{oo}.
That is, we are taking  $\Psi=\tilde{P}= P_1 \otimes P_2$.

For the validity of equation \eqref{ee1} we take $\Gamma= \hat{P}.$

Then, we get from (\ref{tamap}) (see hypothesis \eqref{ee11}) that
$$  | \tilde{P} ( X_n\in . )- \tilde{P} ( Y_n\in  .) |_{tv}\leq 2 \, \tilde{P} \{(x,y)\,|\,  T> n \}=  2 \, \hat{P} \{(x,y)\,|\,  T_1> n \} .$$

Now, from (\ref{ee1}) (and hypothesis \eqref{ee11})  we get that
$\hat{P} (T_1>n)\,\,\leq\,\, (1-\rho)^n.$

As  get that  $\tilde{P} ( X_n\in . )=  P_1 ( X_n\in . )$ and $ \tilde{P} ( Y_n\in . )=  P_2 ( Y_n\in . )$. Then,  it  follows that
$$  | P_1 ( X_n\in . )- P_2 ( Y_n\in  .) |_{tv} =$$
\begin{equation} \label{uff}
| \tilde{P} ( X_n\in . )- \tilde{P} ( Y_n\in  .) |_{tv} \leq \,2\, (1-\rho)^n.
\end{equation}

Note that  $P_1 (X_n \in \,.)$ and $P_2 (Y_n \in \,.)$ are probabilities on the state space $S$.

The bottom line is: suppose $\lambda=(\lambda_1,\lambda_2,...,\lambda_d)$ is the stationary vector for the $d$ by $d$ matrix $\mathcal{P}$ (which has all entries positive) and $\nu$ is another initial vector of probability on $\{1,2,...,d\}$. This defines respectively two probabilities on  $\{1,2,...,d\}^\mathbb{N}$ which we denote
$P_1$ and $P_2$. The vector of probability  $\lambda$ is unique. We also denote, respectively,  $X_n$, $n \in \mathbb{N},$  for the first Markov Process and $Y_n$, $n \in \mathbb{N},$ for the second. We assume for the definition of $P_1$ we use the initial condition $\lambda$, and  for $P_2$ we use another initial vector of probability $\nu$.

Note that for any $n$ and $J\subset S$ we have $ P_1 (X_n \in J)=P_1 (X_1 \in J) $ because $\lambda$ is stationary. In other words $P_1 (X_n =j\,)= \lambda_j$, $j=1,2,...,d$, $\forall n \in \mathbb{N}.$

In this way, we are analyzing the time evolution of two probabilities on the space state $S$, and from (\ref{uff}), we get:
\medskip

\begin{theorem} \label{pcc}
Suppose the transition matrix $\mathcal{P}$
has all entries positive and $\lambda \, \mathcal{P}=\lambda$, where $\lambda$ is the initial stationary vector of probability on $S$. Assume $Y_n$, $n \in \mathbb{N}$, is another Markov probability associated with the stochastic matrix $\mathcal{P}$, but with a different initial vector of probability $\nu$.
Then, for any $n$ we have
$$ | \lambda - P_2 (Y_n \in .) |_{tv} \leq \,2\, (1-\rho)^n,$$
and this describes the speed of convergence to the equilibrium $\lambda$ when times goes to infinity for a chain $Y_n$ with initial condition $\nu$ and matrix transition $\mathcal{P}$.
\end{theorem}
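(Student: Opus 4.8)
The plan is to read off Theorem~\ref{pcc} as the specialization of the estimate \eqref{uff} to the case where the first of the two chains starts from the stationary vector. Concretely, I would let $P_1$ be the Markov probability on $\{1,2,\dots,d\}^{\mathbb N}$ determined by $\mathcal P$ and the initial vector $\lambda$, and let $P_2$ be the one determined by $\mathcal P$ and $\nu$. Since $\lambda\,\mathcal P=\lambda$ gives $\lambda\,\mathcal P^{\,k}=\lambda$ for all $k$, the one-dimensional marginal of the first process is frozen at equilibrium: $P_1(X_n\in\cdot)=\lambda$ for every $n$. Feeding this into \eqref{uff} rewrites its left-hand side as $|\lambda-P_2(Y_n\in\cdot)|_{tv}$, while the right-hand side is already $2(1-\rho)^n$; this is exactly the claim. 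Thus, once \eqref{uff} is available the theorem is immediate, and all of the work sits in producing \eqref{uff}.

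To justify \eqref{uff} I would chain together the three facts established earlier. Proposition~\ref{oo}, in the form \eqref{esteaq}, bounds a state-space total variation distance by $2$ times the probability that the agree-forever coupling time $T$ of \eqref{kli} exceeds $n$, for any joint law carrying the two correct one-dimensional marginals (see \eqref{hum09} and \eqref{hdo54}). The geometric rate comes from Proposition~\ref{aa}, i.e. \eqref{ee1}, which gives $\tilde P(T_1>n)\le(1-\rho)^n$ for the first-meeting time $T_1$ under the independent plan $\tilde P=P_1\otimes P_2$. The bridge between the two is the coalescing coupling $\hat P$, built from the stopping time $T_1$, for which \eqref{tamap} asserts $\hat P(T>n)=\hat P(T_1>n)=\tilde P(T_1>n)$. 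Combining $2\,\hat P(T>n)=2\,\tilde P(T_1>n)\le 2(1-\rho)^n$ with Proposition~\ref{oo} applied to $\hat P$ yields the bound in \eqref{uff}.

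The step I expect to be the main obstacle is precisely this bridge: building $\hat P$ so that it simultaneously (i) carries the correct marginals, so that the left side of Proposition~\ref{oo} is genuinely $|P_1(X_n\in\cdot)-P_2(Y_n\in\cdot)|_{tv}$, and (ii) forces the two trajectories to stick together after their first meeting, so that $T$ collapses onto $T_1$ off a $\hat P$-null set. The independent plan $\tilde P$ is hopeless for (ii)---its two paths separate again and again, so its agree-forever time is infinite almost everywhere---whereas $\hat P$ cures this at the price of delicate bookkeeping on the two marginals: the $X$-side is secured by Proposition~\ref{teq1}, while the $Y$-side is genuinely subtle (the Remark after Proposition~\ref{teq3} flags exactly where care is needed), and Proposition~\ref{coli} is what guarantees that the law of $T_1$ is unchanged in passing from $\tilde P$ to $\hat P$, so that the geometric estimate transfers. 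The decisive use of the hypothesis is that every entry of $\mathcal P$ is at least $\rho>0$: this is what lets the induction in Proposition~\ref{aa} strip off one factor $(1-\rho)$ at each step, making the convergence exponential rather than merely asymptotic.
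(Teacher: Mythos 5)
Your reduction of Theorem \ref{pcc} to \eqref{uff} via stationarity of $\lambda$ is exactly the paper's, and your three ingredients (Proposition \ref{oo}, Proposition \ref{aa}, and the coalescing plan $\hat{P}$ with \eqref{tamap}) are the paper's as well. Where you deviate is in the final assembly, and your version is in fact the only coherent one: you apply \eqref{esteaq} to $\Psi=\hat{P}$, whereas the paper applies it to $\Psi=\tilde{P}=P_1\otimes P_2$ and then asserts $2\,\tilde{P}(T>n)=2\,\hat{P}(T_1>n)$; since, as you correctly observe, $T=\infty$ holds $\tilde{P}$-almost surely, we have $\tilde{P}(T>n)=1$ for every $n$, so the paper's displayed chain is vacuous as a bound and false as an equality. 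Your reading is clearly the intended argument, cleaned up.

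But precisely at this point your proposal has a genuine gap, which you flag (``the $Y$-side is genuinely subtle'') but never close, and which cannot be closed by citing the paper: applying Proposition \ref{oo} to $\hat{P}$ bounds $|\hat{P}(X_n\in\cdot)-\hat{P}(Y_n\in\cdot)|_{tv}$, and to identify this with $|P_1(X_n\in\cdot)-P_2(Y_n\in\cdot)|_{tv}$ you need both time-$n$ marginals of $\hat{P}$ to be the correct ones. The $X$-side is Proposition \ref{teq1}; but on the $Y$-side the Remark after Proposition \ref{teq3} does not merely counsel care --- it asserts that $\hat{P}(X_n\in S,\,Y_n=j)=P_2(Y_n=j)$ is in general \emph{false} for the paper's $\hat{P}$. (The cause is visible in the construction, e.g.\ in \eqref{ou32}: at the meeting time the $Y$-coordinate's constraint is replaced by $S$, so the transition factor of the $Y$-chain into the meeting state $j$ is dropped.) Hence, taken together with the paper's own statements, your first identification is unjustified, and the geometric estimate \eqref{ee1} never connects to the total variation distance you want to bound. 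To complete the argument along your lines you must supply the missing lemma yourself: construct the coupling so that up to $T_1$ the pair carries the full independent law (including the $Y$-transition into the meeting state $j$) and the two coordinates move together afterwards, and then prove $\hat{P}(Y_n\in\cdot)=P_2(Y_n\in\cdot)$ by the strong Markov property --- conditionally on $T_1=k$ and the meeting state $j$, the continuation of $X$ from $j$ has the same law as the continuation $Y$ would have had, both being chains with matrix $\mathcal{P}$ started at $j$. With that lemma (Doeblin's classical step), your chain $|P_1(X_n\in\cdot)-P_2(Y_n\in\cdot)|_{tv}\le 2\,\hat{P}(T>n)=2\,\tilde{P}(T_1>n)\le 2(1-\rho)^n$ is complete, and Theorem \ref{pcc} follows by stationarity exactly as you state.
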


\bigskip

\section{The $\bar{d}$ distance} \label{dbar}

Given two probabilities on $\mu,\nu$ in the set $\Omega= \{1,2\}^\mathbb{N},$ consider the set $\mathcal{C}(\mu,\nu)$ of plans $\lambda$ in $\Omega \times \Omega$ such the projection in the first coordinate is $\mu$ and in the second is $\nu$.

\begin{definition} - A joining is a probability on $\mathcal{C}(\mu,\nu)$ which is invariant for the dynamical system $T:  \Omega \times \Omega\to \Omega \times \Omega$ given by $T(x,y)= (\sigma(x), \sigma(y))$. The set of such joinings is denoted by $J(\mu,\nu)$.
\end{definition}

A general reference for joinings is \cite{Glas}.

\bigskip

\begin{definition}
Denote $\{1,2\}^n=Q_n$ and consider the $d_n$-Hamming metric in $Q_n$ defined by
$$d_n (x,y) =\frac{1}{n} \, \sum_{j=1}^n \, I_{\{x_j\neq y_j\}}.$$

\end{definition}

Points $x$ in $\Omega $ are denoted by $x=(x_0,x_1,x_2,...).$
\medskip

\begin{definition}
For two $\sigma$-invariant probabilities $\mu$ and $\nu$ we define the distance
$$\bar{d}(\mu,\nu)= \inf_{\lambda \in J(\mu,\nu)} \int \, I_{\{x_0\neq y_0\}}\lambda (dx, dy)=$$
$$  \inf_{\lambda \in J(\mu,\nu)}\, \lim_{n \to \infty} \int d_n (x,y) \lambda (dx , dy)=$$
$$ \lim_{n \to \infty}  \inf_{\lambda \in J(\mu,\nu)}\,  \int d_n (x,y) \lambda (dx , dy).$$

\end{definition}

In the above equalities, we used the ergodic theorem (see \cite{Walk}).

The above value tries to minimize the asymptotic mean disagreement between the symbols of the paths for a given plan.

\medskip

We point out that the $\bar{d}$ has a dynamical content. Several dynamical properties are preserved under limit over the $\bar{d}$ distance (see \cite{CQ}). In this  paper, properties related to Bernoullicity,  $g$-measures, and Gibbs states are considered. For instance, the map that takes a potential to its equilibrium state is continuous with respect to the $\bar{d}$ distance (see Theorem 3 in \cite{CQ}).

\medskip

\begin{theorem} \label{CC}
Suppose $\mu$ is the independent Bernoulli probability associated to $p_1,p_2$ and
$\nu$ is the independent Bernoulli probability associated to $q_1,q_2$.

Suppose $p_1\leq q_1$.
 Then,
 $$\bar{d}(\mu,\nu)=\,q_1\,-\,p_1= \frac{1}{2} (|q_1-p_1| + |q_2-p_2|\,).$$
\end{theorem}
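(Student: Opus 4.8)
The plan is to prove the two equalities separately, the second being essentially free. Since $\mu$ and $\nu$ are Bernoulli probabilities on $\{1,2\}^\mathbb{N}$ we have $p_1+p_2=1$ and $q_1+q_2=1$, so $|q_2-p_2|=|(1-q_1)-(1-p_1)|=|q_1-p_1|$ and hence $\frac{1}{2}(|q_1-p_1|+|q_2-p_2|)=|q_1-p_1|=q_1-p_1$, using $p_1\le q_1$. Thus it remains only to show $\bar{d}(\mu,\nu)=q_1-p_1$, which I would do by proving matching lower and upper bounds.

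For the lower bound, fix any joining $\lambda\in J(\mu,\nu)$. The push-forward of $\lambda$ under $(x,y)\mapsto(x_0,y_0)$ is a coupling of the single-site marginals $(p_1,p_2)$ and $(q_1,q_2)$ on $\{1,2\}$. For any such coupling $\gamma$ the diagonal mass satisfies $\gamma(i,i)\le\min(p_i,q_i)$, so
$$\lambda(x_0\neq y_0)=1-\sum_{i=1}^2\gamma(i,i)\ge 1-\sum_{i=1}^2\min(p_i,q_i)=1-(p_1+q_2)=q_1-p_1,$$
where I used $p_1\le q_1$ (hence $\min(p_1,q_1)=p_1$) and $q_2\le p_2$ (hence $\min(p_2,q_2)=q_2$). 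Taking the infimum over $\lambda$ gives $\bar{d}(\mu,\nu)\ge q_1-p_1$.

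For the upper bound I would exhibit a single joining attaining this value. Let $\gamma$ be the coupling of $(p_1,p_2)$ and $(q_1,q_2)$ on $\{1,2\}\times\{1,2\}$ that maximizes the diagonal mass, namely $\gamma(1,1)=p_1$, $\gamma(2,2)=q_2$, $\gamma(2,1)=q_1-p_1$, $\gamma(1,2)=0$; one checks directly that its row sums are $(p_1,p_2)$ and its column sums are $(q_1,q_2)$. Then set $\lambda=\gamma^{\otimes\mathbb{N}}$, the i.i.d. product of $\gamma$ over the coordinates. Because $\mu$ and $\nu$ are themselves product (Bernoulli) measures, the first and second marginals of $\lambda$ are exactly $\mu$ and $\nu$, so $\lambda\in\mathcal{C}(\mu,\nu)$; and since $\lambda$ is a Bernoulli measure on $(\{1,2\}\times\{1,2\})^\mathbb{N}$ it is invariant under $T(x,y)=(\sigma(x),\sigma(y))$, hence $\lambda\in J(\mu,\nu)$. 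Finally $\int I_{\{x_0\neq y_0\}}\,d\lambda=\gamma(\{(a,b):a\neq b\})=q_1-p_1$, giving $\bar{d}(\mu,\nu)\le q_1-p_1$ and completing the proof.

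The step I expect to be the main obstacle is the upper bound, and specifically the verification that the product coupling $\gamma^{\otimes\mathbb{N}}$ is a legitimate joining: this is exactly where the independence (Bernoulli) hypothesis is indispensable, since only for product measures does taking an independent product of single-site optimal couplings reproduce the correct global marginals and remain shift-invariant. The lower bound, by contrast, is a soft consequence of the one-coordinate coupling inequality and uses nothing about the product structure.
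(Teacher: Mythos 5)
Your proof is correct and follows essentially the same route as the paper: a lower bound derived from the one-coordinate marginal constraints (the paper phrases this as a small linear program in $t=\lambda([1]\times[1])$, you as the inequality $\gamma(i,i)\le\min(p_i,q_i)$ --- the same computation), and an upper bound realized by exactly the same i.i.d.\ product of the optimal one-site coupling $\gamma(1,1)=p_1$, $\gamma(1,2)=0$, $\gamma(2,1)=q_1-p_1$, $\gamma(2,2)=q_2$. If anything, you are more careful than the paper, which asserts without proof that the product plan lies in $J(\mu,\nu)$, whereas you verify both the marginal and shift-invariance conditions, and you also justify the trivial second equality that the paper leaves implicit.
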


 {\bf Proof:}

 Given $\lambda \in J(\mu,\nu)$ we have
$$   \int \, I_{\{x_0\neq y_0\}}\lambda (dx, dy)=$$
$$   \int_{[1]\times[1]} \, I_{\{x_0\neq y_0\}}\lambda (dx, dy)+   \int_{[1]\times[2]} \, I_{\{x_0\neq y_0\}}\lambda (dx, dy)+$$
$$   \int_{[2]\times[1]} \, I_{\{x_0\neq y_0\}}\lambda (dx, dy)+   \int_{[2]\times[2]} \, I_{\{x_0\neq y_0\}}\lambda (dx, dy)=$$
$$   \int_{[1]\times[2]} \, I_{\{x_0\neq y_0\}}\lambda (dx, dy)+   \int_{[2]\times[1]} \, I_{\{x_0\neq y_0\}}\lambda (dx, dy).$$

 We denote $a_{i,j}= \int_{[i]\times[j]} \, I_{\{x_0\neq y_0\}}\lambda (dx, dy)$.

 %Note that for $i\neq j$ we have $a_{i,j}=\lambda([i]\times [j]).$

 We have the relations
 $$ \lambda_{1,1}+ \lambda_{2,1}= \nu[1]=q_1 \,,\,\,\,  \lambda_{1,2}+ \lambda_{2,2}=\nu[2]=q_2\, ,$$
 $$\,\,\, \lambda_{1,1}+ \lambda_{1,2}=\mu[1]=p_1  \,,\,\,\,  \lambda_{2,1}+ \lambda_{2,2}=\mu[2]=p_2\, .
 $$

 Denote by $t=\lambda_{1,1}$, then

 $$\lambda_{1,2}= p_1 - t,$$

 $$\lambda_{2,1}= q_1 - t,$$

 and

$$\lambda_{2,2}= p_2  - q_1 + t ,$$

 We  have to minimize $a_{12} + a_{21}\,=\,p_1 + q_1 - 2 t$ given the above linear constrains. This is a linear maximization problem, and the largest possible value
 of $t$ will be the solution.

 From the fact that the $\lambda_{i,j}$ are non-negative, we get that $t\leq q_1$ and $t\leq p_1$. But as we assume that
 $q_1\geq p_1$, we get the only restriction $0\leq t\leq p_1$.

Taking $\lambda_{11}= p_1$ we get the minimal value for the sum $a_{12} + a_{21}$ which is $p_1 + q_1 - 2 p_1= q_1-p_1$.

This shows that  $\bar{d}(\mu,\nu)\geq\,q_1\,-\,p_1.$

\bigskip

Now we will show that there exists a plan $\lambda$ that realizes the value $q_1-p_1$.

Consider a new Bernoulli independent process $(X_n,Y_n)$ with value on $\{1,2\} \times \{1,2\}.$

We set  $$P( X_0=1,Y_0=1) = p_1,\,P( X_0=1,Y_0=2) =\,\,0,$$
$$\,P( X_0=2,Y_0=1) = q_1-p_1,\,P( X_0=2,Y_0=2) =q_2=p_2-q_1 +p_1 ,\,
$$

It is easy to se that such plan $\lambda$ is in $J(\mu,\nu)$ and  $\int \, I_{\{x_0\neq y_0\}}\lambda (dx, dy)=q_1-p_1$.\qed

A natural question is what can be said for the $\bar{d}$ distance of two finite state Markov Processes (taking values in the same state space  $S$) obtained from two different stochastic matrices, or more generally, for Gibbs states. This is not a so easy problem and partial results can be found for instance, in \cite{Ellis2} \cite{GLT}  \cite{BFG} and \cite{BFG2}.
In most of them, couplings are used in an essential way.

We refer the reader to \cite{FG} \cite{Perez} \cite{To} for more details on couplings from a probabilistic point of view.

%\end{document}

\section{Contraction for the dual of the Ruelle Operator} \label{Ru}

In this section, we study properties related to Gibbs states of
Lipchitz potentials (see \cite{PP} or \cite{LFT} for general results on the topic).

Suppose $A=\log J$ is Lipchitz and normalized, that is, for any $x \in \Omega$ we have $\mathcal{L}_{\log J} (1) (x) =1.$
\bigskip

 If  $ x=(x_1,x_2,..)\in \{1,2,...,d\}^\mathbb{N}$ and $t\in \mathbb{N},$ we denote by  $z_j^t (x)$, $j=1,2,...,d^t$, the
 $d^t$ solutions of $\sigma^t(z)=x.$

  We denote $J^t (z^t_j(x))$ the expression $e^{\sum_{k=0}^{t-1} \log J(\sigma^k (z^t_j(x))  }$.

Therefore, given $x$ and $y$ we have two probabilites
$$(P^t)^*(\delta_x)=\sum_{j=1}^{d^t} \delta_{z_j^t(x)} J^t (z_j^t(x))$$ and
$$(P^t)^* (\delta_y)= \sum_{j=1}^{d^t} \delta_{z_j^t(y)} J^t (z_j^t(y)).$$

For each $k$ we have that the points $z_j^k(x)$ and $z_j^k(y)$, $j=1,2,3,..,2^k$, are all different but the distance $d_\theta( z_j^k(x),z_j^k(y))$ is at most $\theta^k$ ($j$ pair by pair).

Suppose $A=\log J$ has Lipchitz constant $M$.
Then, for $t$ and $j=1,2,3,..,2^t$ fixed
$$|\, \sum_{k=0}^{t-1} \log J(\sigma^k (z^t_j(x))- \sum_{k=0}^{t-1} \log J(\sigma^k (z^t_j(y))\,|\leq$$
$$ \sum_{k=0}^{t-1} M\, d_\theta(\sigma^k (z^t_j(x))- \sigma^k (z^t_j(y))\,)\,|\leq\,$$
$$M\,[\,   \sum_{k=0}^{t-1} \theta^k\,]\,d_\theta(x,y)\leq M \frac{1}{1-\theta} \, d_\theta(x,y)$$

In this way for any $x$ and $y$ we have for any $t$, $j=1,2,3,..,2^{\,t}$,
\begin{equation}\label{bounded_distortion} \frac{J^t (z^t_j(x))}{J^t (z^t_j(y))} \,\leq e^{M \frac{1}{1-\theta} \, d_\theta(x,y) } .\end{equation}

The above kind of estimation is known as the bounded distortion property. It is a key element in the subsequent developments.

\medskip

\begin{lemma} For every $\delta>0$, there is $T$ such that for any $k\geq T$ there exists an $a>0$, so that
$$ \sup_{\Gamma\in \mathcal{C} (\,(P^k)^*(\delta_x),\, (P^k)^*(\delta_y))}\, \Gamma \,\{(x',y'))\in\, \Omega\times \Omega \,:\, d_\theta (x',y') \leq  \delta   \,\}\,\geq a.$$
\end{lemma}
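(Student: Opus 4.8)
The plan is to exploit the natural index-preserving pairing between the preimages of $x$ and of $y$ under $\sigma^k$. First I would label the $d^k$ solutions of $\sigma^k(z)=x$ by the strings $(a_1,\dots,a_k)\in\{1,\dots,d\}^k$, so that $z_j^k(x)=(a_1,\dots,a_k,x_1,x_2,\dots)$ and, under the same index $j$, $z_j^k(y)=(a_1,\dots,a_k,y_1,y_2,\dots)$. With this labelling the paired preimages agree on their first $k$ coordinates, hence $d_\theta(z_j^k(x),z_j^k(y))\le \theta^{k+1}\le\theta^k$ for every $j$. Choosing $T$ with $\theta^T\le\delta$ then guarantees that, for all $k\ge T$, each diagonal pair $(z_j^k(x),z_j^k(y))$ lies in the target set $\{(x',y'):d_\theta(x',y')\le\delta\}$.

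Next I would build a coupling that loads as much mass as possible onto this diagonal. Recall that $(P^k)^*(\delta_x)=\sum_j J^k(z_j^k(x))\,\delta_{z_j^k(x)}$ and similarly for $y$, and that normalization of $\log J$ gives $\sum_j J^k(z_j^k(x))=1$. I would define the subprobability $\Gamma_0=\sum_{j} \big(J^k(z_j^k(x))\wedge J^k(z_j^k(y))\big)\,\delta_{(z_j^k(x),z_j^k(y))}$, whose two marginals are dominated by the respective target marginals. Writing $m=\Gamma_0(\Omega\times\Omega)$ and denoting by $\rho_1,\rho_2$ the nonnegative residual marginals (each of total mass $1-m$), I complete $\Gamma_0$ to a genuine coupling by setting $\Gamma=\Gamma_0+\frac{1}{1-m}\,\rho_1\otimes\rho_2$ (and $\Gamma=\Gamma_0$ when $m=1$). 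A short marginal check confirms $\Gamma\in\mathcal{C}((P^k)^*(\delta_x),(P^k)^*(\delta_y))$. Since $\Gamma\ge\Gamma_0$ and, for $k\ge T$, $\Gamma_0$ is carried by $\delta$-close pairs, the quantity to be bounded is at least $m$.

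It remains to bound $m$ below by a positive constant, and this is where the bounded distortion estimate \eqref{bounded_distortion} enters as the crux of the argument. Applying \eqref{bounded_distortion} in both directions (using the symmetry of $d_\theta$) shows that $J^k(z_j^k(x))/J^k(z_j^k(y))$ and its reciprocal are both at most $e^{M\frac{1}{1-\theta}d_\theta(x,y)}$, so that for every $j$
$$ J^k(z_j^k(x))\wedge J^k(z_j^k(y)) \ \ge\ e^{-M\frac{1}{1-\theta}d_\theta(x,y)}\,J^k(z_j^k(x)). $$
Summing over $j$ and using $\sum_j J^k(z_j^k(x))=1$ yields $m\ge e^{-M\frac{1}{1-\theta}d_\theta(x,y)}$, so the lemma holds with $a=e^{-M\frac{1}{1-\theta}d_\theta(x,y)}$; since $d_\theta(x,y)\le\theta$, one may in fact take the uniform value $a=e^{-M\frac{\theta}{1-\theta}}$, independent of $x$ and $y$.

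I expect the only genuine obstacle to be organizational rather than conceptual: keeping the diagonal labelling consistent and verifying that the completion $\Gamma=\Gamma_0+\frac{1}{1-m}\rho_1\otimes\rho_2$ reproduces exactly the prescribed marginals. The geometric fact that same-string preimages are close, combined with the uniform comparability of the weights supplied by bounded distortion, does all the heavy lifting.
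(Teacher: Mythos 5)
Your proposal is correct and is essentially the paper's own argument: pair the $k$-step preimages of $x$ and $y$ along their common prefix, form a coupling consisting of a diagonal part plus a normalized product of the two residual measures, and lower-bound the diagonal mass via the bounded distortion estimate \eqref{bounded_distortion}. The only difference is cosmetic: you weight the diagonal pairs by the pairwise minima $J^k(z_j^k(x))\wedge J^k(z_j^k(y))$, while the paper uses the $x$-independent minorants $\alpha_{j}^{T}=\inf_{z\in\Omega}J^{T}(z_j^{T}(z))$; since $J^k(z_j^k(x))\wedge J^k(z_j^k(y))\ge \alpha_j^k$, your coupling carries at least as much diagonal mass, and both constructions yield the same uniform constant $a=e^{-M\theta/(1-\theta)}$.
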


%\end{document}

{\bf Proof.}

In order to prove the Lemma, given $x$ and $y$ we construct explicitly an element in
$$\mathcal{C} (\,(P^k)^*(\delta_x),\, (P^k)^*(\delta_y)).$$

By the orbit structure of the Bernoulli shift, the preimages of $x$ and $y$ come in pairs, and as stated above, the distance of a pair satisfies  $d_\theta( z_j^k(x),z_j^k(y)) < \theta^k$. Hence, if $T > \log \delta / \log \theta$, then $d_\theta( z_j^T(x),z_j^T(y)) < \delta$. The other basic observation for the construction stems from bounded distortion in (\ref{bounded_distortion}). Namely, there exists $\Lambda \in (0,1]$, independent from $T$, $j$ and $x$ such that\footnote{$\Lambda=1$ if and only if $J$ is constant on cylinders of length 1} for $j=1,2,3,..,d^T$
$$ \Lambda J^T(z_j^T(x))  \leq  \alpha_{j}^T:= \inf_{z \in \Omega}{J^T(z_j^T(z))} \leq J^T(z_j^T(x)) .   $$
For $\beta_{j}^T(x) :=J^T(z_j^T(x))- \alpha_{j}^T  $, $j=1,2,3,..,d^T$, we hence obtained a decomposition into a strictly positive part $\beta_{j}^T(x)$ and a
 strictly positive part $\alpha_{j}^T$, which is independent from $x$ (and $y$) but comparable to  $J^T(z_j^T(x))$ by $\Lambda$.
Hence, we obtain a decomposition of a probability measure into two sub-probability measures by
 $$(P^T)^*(\delta_x)=\sum_{j=1}^{d^T} \delta_{z_j^T(x)}  \alpha_{j}^T +        \sum_{j=1}^{d^T} \delta_{z_j^T(x)} \beta_{j}^T(x) =: \mu_T +  \nu_T^x = $$
 $$\sum_{j=1}^{d^T} \delta_{z_j^T(x)} J^T (z_j^T(x))  .$$

We claim that the probability $\Gamma$ on $\Omega \times \Omega$ defined by
\[ \Gamma :=  \sum_{j=1}^{d^T} \delta_{(z_j^T(x),z_j^T(y))}  \alpha_{j}^T +  \frac{1}{ \nu_T^x(\Omega)} \nu_T^x \otimes \nu_T^y.\]
is in $\Gamma \in \mathcal{C} (\,(P^k)^*(\delta_x),\, (P^k)^*(\delta_y))$.

Indeed, consider a Borel set $A$, then
$$\Gamma (\Omega \times A) = \sum_{j=1}^{d^T} \delta_{(z_j^T(x),z_j^T(y))}  \alpha_{j}^T (\Omega \times A)\, +  \,\frac{1}{ \nu_T^x(\Omega)} \nu_T^x \otimes \nu_T^y\, (\Omega \times A)=$$
$$ \sum_{j=1}^{d^T} \delta_{z_j^T(y)}  \alpha_{j}^T (A)\,+\,  \nu_T^y(A)=  \mu_T (A) +  \nu_T^y(A)=$$
$$  \sum_{j=1}^{d^T} \delta_{z_j^T(y)} J^T (z_j^T(y))\,(A)\,=\, (P^T )^* (\delta_y)\,(A).$$

Note that $\nu_T^x(\Omega)= \nu_T^y(\Omega)= 1 - \mu_T (\Omega).$

In the same way, given $A$ we have
$$\Gamma (A \times \Omega) = \sum_{j=1}^{d^T} \delta_{(z_j^T(x),z_j^T(y))}  \alpha_{j}^T (A \times \Omega)\, +  \,\frac{1}{ \nu_T^x(\Omega)} \nu_T^x \otimes \nu_T^y\, (A \times \Omega)=$$
$$ \sum_{j=1}^{d^T} \delta_{z_j^T(x)}  \alpha_{j}^T (A)\,+\,  \nu_T^x(A)\, \frac{\nu_T^y(\Omega)}{\nu_T^x (\Omega)} =  \mu_T (A) +  \nu_T^x(A)=$$
$$  \sum_{j=1}^{d^T} \delta_{z_j^T(x)} J^T (z_j^T(x))\,(A)\,=\, (P^T )^* (\delta_x)\,(A).$$

The analogous result for sets of the form $\Omega\times A$ is true.

This shows that $\Gamma \in \mathcal{C} (\,(P^k)^*(\delta_x),\, (P^k)^*(\delta_y))$.

We claim that $\Gamma$ also satisfies
\[  \Gamma \,\{(x',y'))\in\, \Omega\times \Omega \,:\, d_\theta (x',y') \leq  \delta   \,\} \geq  \Lambda.   \]

Indeed,

$$  \Gamma \,\{(x',y'))\in\, \Omega\times \Omega \,\, d_\theta (x',y') \leq  \delta   \,\} \geq$$
 $$\sum_{j=1}^{d^T} \delta_{(z_j^T(x),z_j^T(y))} \,\, \alpha_{j}^T \,[\,\,\{(x',y'))\in\, \Omega\times \Omega \,:\, d_\theta (x',y') \leq  \delta   \,\}\,\,]=$$
 $$ \sum_{j=1}^{d^T} \delta_{z_j^T(x)} \,\, \alpha_{j}^T  (x)\geq  \sum_{j=1}^{d^T}    \Lambda J^T(z_j^T(x))    \,=\,\Lambda    $$

\medskip

This proves the Lemma for $a := \Lambda$.\qed

\bigskip

We define
$$ \text{var}_n f = \sup \{ |f(x)-f(y)|\,: x_i=y_i,\, 0\leq i <n \},$$
and the pseudo norm
$$|f|_\theta= \sup \{\frac{\text{var}_n f}{\theta^n}\,\,:\, n \geq 0\}.$$

\bigskip

We know that if $\log J$ is Lipchitz, then, there exist $C>0$ and $\alpha_1\in(0,1)$ such that for any $t\geq 0$, and any Lipchitz function
$\phi$ we have (prop 2.1 in \cite{PP})

$$|\mathcal{L}^t (\phi)\,|_\theta \leq  C\, \sup_x  |\phi(x)| + \theta^t |\phi|_\theta. $$

We wrote this in the notation of \cite{Hair} as

$$ \sup \{\frac{\text{var}_n \,\mathcal{L}^t (\phi) }{\theta^n}\,\,:\, n \geq 0\} \leq \, C\, \sup_x  |\phi(x)|\, +\,\, \alpha_1  \sup \{\frac{\text{var}_n \,\phi}{\theta^n}\,\,:\, n \geq 0\} . $$

\bigskip

The above expression is known as the Lasota-Yorke inequality.

\medskip

Take $\delta < \frac{1-\alpha_1}{2\, C} $. Now, we define a metric $d(x,y)= \min \{1,\delta^{-1} d_\theta(x,y)\}.$
The two metrics $d$ and $d_\theta$  are equivalent.

Remember that we denote $\mathcal{C} (\mu_1,\mu_2) $ the set of plans in $\Omega\times \Omega$ such that the projection in the first coordinate is $\mu_1$ and in the second is $\mu_2$.

Remember also that we define the $1$-Wasserstein metric associated to $d$
$$ d_1( \mu_1,\mu_2)\, = \inf \{ \int\,\int d(x,y) \,\,d \,\,\Gamma(dx,dy)\,\,|\,\, \Gamma \in \mathcal{C} (\mu_1,\mu_2)\,\}.$$

\begin{proposition} There exist $\alpha<1$, where $\alpha =\max \{1- \frac{a}{2}, \frac{1}{2} (1 + \alpha_1)  \},$ and $t>0$,
such that, for any $x,y$
$$ d_1( (P^t)^*(\delta_x), (P^t)^*(\delta_y))\leq \alpha\, d (x,y).$$
\end{proposition}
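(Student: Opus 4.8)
The plan is to combine Kantorovich duality \eqref{falt1} with the two tools assembled above: the explicit near-diagonal coupling from the Lemma, and the Lasota--Yorke inequality. First I would record that for any bounded measurable $\phi$ one has $\int \phi \, d(P^t)^*(\delta_x)=\mathcal{L}^t(\phi)(x)$, so that by duality
$$ d_1\big((P^t)^*(\delta_x),(P^t)^*(\delta_y)\big)=\sup_{\phi:\ \mathrm{Lip}_d(\phi)\le 1}\big\{\mathcal{L}^t(\phi)(x)-\mathcal{L}^t(\phi)(y)\big\}. $$
The whole argument then splits according to whether $d(x,y)=1$ (i.e. $d_\theta(x,y)\ge\delta$, the \emph{far} case) or $d(x,y)=\delta^{-1}d_\theta(x,y)<1$ (the \emph{near} case); the two terms in $\alpha=\max\{1-\tfrac a2,\tfrac12(1+\alpha_1)\}$ are exactly the bounds produced in these two regimes.

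For the near case I would work on the dual side. If $\mathrm{Lip}_d(\phi)\le1$ then $|\phi(x')-\phi(y')|\le d(x',y')\le\delta^{-1}d_\theta(x',y')$, so $\phi$ is $\delta^{-1}$-Lipschitz for $d_\theta$, i.e. $|\phi|_\theta\le\delta^{-1}$; moreover $d\le1$ forces the oscillation of $\phi$ to be at most $1$, and since $\mathcal{L}^t(\phi+c)=\mathcal{L}^t(\phi)+c$ by normalization the difference $\mathcal{L}^t(\phi)(x)-\mathcal{L}^t(\phi)(y)$ is unchanged by adding a constant, so I may assume $0\le\phi\le1$ and $\sup_x|\phi(x)|\le1$. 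Feeding these two estimates into the Lasota--Yorke inequality gives $|\mathcal{L}^t(\phi)|_\theta\le C+\alpha_1\delta^{-1}$, whence $|\mathcal{L}^t(\phi)(x)-\mathcal{L}^t(\phi)(y)|\le(C+\alpha_1\delta^{-1})\,d_\theta(x,y)=(C\delta+\alpha_1)\,d(x,y)$. Since $\delta<\tfrac{1-\alpha_1}{2C}$ gives $C\delta<\tfrac{1-\alpha_1}{2}$, the multiplicative constant is strictly below $\tfrac12(1+\alpha_1)$, as required.

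For the far case I would instead use the primal side and the coupling $\Gamma$ constructed in the Lemma for the pair $(x,y)$ at a time $t\ge T$: since $d_1\le\int d\,d\Gamma$, I split $\Gamma$ into its diagonal part $\sum_j\delta_{(z_j^t(x),z_j^t(y))}\alpha_j^t$ and its product part $\tfrac{1}{\nu_t^x(\Omega)}\,\nu_t^x\otimes\nu_t^y$. Each matched pair $(z_j^t(x),z_j^t(y))$ satisfies $d_\theta<\theta^t<\delta$, hence contributes $d=\delta^{-1}d_\theta\le\delta^{-1}\theta^t$, while the diagonal part carries mass $\mu_t(\Omega)=\sum_j\alpha_j^t\ge a$ (using $\sum_j J^t(z_j^t(x))=1$ from normalization together with $\alpha_j^t\ge\Lambda\,J^t(z_j^t(x))$ and $a=\Lambda$); the product part has mass $1-\mu_t(\Omega)\le1-a$ and $d\le1$ on it. This yields $d_1\le(1-a)+\delta^{-1}\theta^t$, and choosing $t$ large enough that $\delta^{-1}\theta^t\le a/2$ gives $d_1\le1-\tfrac a2=(1-\tfrac a2)\,d(x,y)$.

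The delicate point, which I would spell out, is the simultaneous choice of $t$ and $\delta$. Reading the Lasota--Yorke inequality with a fixed $\alpha_1\in(0,1)$ (valid for all $t$ beyond some $t_0$, since $\theta^t\le\alpha_1$ there), I can fix $\delta<\tfrac{1-\alpha_1}{2C}$ \emph{independently} of $t$, which in turn fixes the threshold $T=T(\delta)$ of the Lemma. I then take $t\ge\max\{t_0,T(\delta)\}$ and large enough that $\delta^{-1}\theta^t\le a/2$. This is possible because $a=\Lambda$ comes from the bounded distortion bound \eqref{bounded_distortion} and can be taken bounded below independently of $T$, while $\theta^t\to0$. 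With these choices both regimes hold for this single $t$, giving the stated contraction with $\alpha=\max\{1-\tfrac a2,\tfrac12(1+\alpha_1)\}<1$.
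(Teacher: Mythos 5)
Your proof is correct and follows essentially the same route as the paper: Kantorovich duality combined with the Lasota--Yorke inequality in the regime $d_\theta(x,y)\le\delta$, and the coupling from the Lemma in the regime $d_\theta(x,y)>\delta$, producing exactly the two terms of $\alpha=\max\{1-\frac{a}{2},\frac{1}{2}(1+\alpha_1)\}$. The only differences are minor bookkeeping: in the far case you bound the matched part of the coupling by $\delta^{-1}\theta^t$ (requiring $t$ large) where the paper applies the Lemma at threshold $\delta/2$ to get $d\le\frac{1}{2}$ on the good set, and your explicit discussion of the compatible choice of $t$, $\delta$, and $a=\Lambda$ spells out uniformity that the paper leaves implicit.
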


\bigskip

The proof will be done later.
\medskip

Suppose that this result is proved, then we get:

\begin{theorem} \label{maincon} There exist $\alpha<1$, where $\alpha =\max \{1- \frac{a}{2}, \frac{1}{2} (1 + \alpha_1)  \},$ and $t>0$,
such that, for any $\mu_1,\mu_2$
$$ d_1( (P^t)^*(\mu_1), (P^t)^*(\mu_2))\leq \alpha\, d_1 (\mu_1,\mu_2).$$
\end{theorem}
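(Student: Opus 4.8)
The plan is to lift the pointwise contraction of the preceding Proposition from Dirac masses to arbitrary probabilities by means of the Kantorovich duality \eqref{falt1}. This route is attractive because it converts the delta-to-delta estimate into a regularity statement about $\mathcal{L}^t$ and thereby sidesteps the only genuinely delicate point of the alternative argument (measurable selection of optimal plans, discussed below).

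First I would reformulate the delta-level estimate in dual form. Applying \eqref{falt1} to the pair $(P^t)^*(\delta_x)$, $(P^t)^*(\delta_y)$ and using the adjoint relation $\int \phi\, d(P^t)^*(\delta_z) = \mathcal{L}^t(\phi)(z)$, the Proposition reads
$$\sup_{\phi:\,\mathrm{Lip}_d(\phi)\le 1}\big(\mathcal{L}^t(\phi)(x) - \mathcal{L}^t(\phi)(y)\big) = d_1\big((P^t)^*(\delta_x),(P^t)^*(\delta_y)\big)\le \alpha\, d(x,y).$$
Since this holds for all $x,y$, and since $\phi$ may be replaced by $-\phi$, it says precisely that $\mathcal{L}^t$ maps every function that is $1$-Lipschitz for $d$ to a function that is $\alpha$-Lipschitz for $d$; that is, $\mathcal{L}^t$ contracts the Lipschitz constant by the factor $\alpha$.

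With this regularizing property in hand the general case is immediate. Fix probabilities $\mu_1,\mu_2$ and a test function $\phi$ with $\mathrm{Lip}_d(\phi)\le 1$. Using the adjoint relation once more,
$$\int \phi\, d(P^t)^*(\mu_1) - \int \phi\, d(P^t)^*(\mu_2) = \int \mathcal{L}^t(\phi)\, d\mu_1 - \int \mathcal{L}^t(\phi)\, d\mu_2.$$
Because $\mathcal{L}^t(\phi)$ is $\alpha$-Lipschitz and $\alpha>0$, the function $\alpha^{-1}\mathcal{L}^t(\phi)$ is $1$-Lipschitz, so the right-hand side is bounded by $\alpha\, d_1(\mu_1,\mu_2)$ by \eqref{falt1}. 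Taking the supremum over all admissible $\phi$ and invoking \eqref{falt1} on the left yields $d_1\big((P^t)^*(\mu_1),(P^t)^*(\mu_2)\big)\le \alpha\, d_1(\mu_1,\mu_2)$.

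I do not expect any hard analytic step once the Proposition is granted; the entire content sits in the delta-to-delta contraction, and the passage to general measures is formal. For completeness I would note the more hands-on alternative: glue, for each pair $(x,y)$, an optimal plan $\Gamma_{x,y}\in\mathcal{C}\big((P^t)^*(\delta_x),(P^t)^*(\delta_y)\big)$ against an optimal coupling $\Gamma$ of $\mu_1,\mu_2$, forming $\int \Gamma_{x,y}\, d\Gamma(x,y)$, which is a coupling of $(P^t)^*(\mu_1)$ and $(P^t)^*(\mu_2)$ with cost at most $\int \alpha\, d(x,y)\, d\Gamma = \alpha\, d_1(\mu_1,\mu_2)$. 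The obstacle there is exactly that one must check that $(x,y)\mapsto \Gamma_{x,y}$ can be chosen measurably, which is why the dual argument above is preferable.
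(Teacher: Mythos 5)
Your proof is correct, but it lifts the delta-level contraction to general measures by a genuinely different mechanism than the paper. The paper's argument is exactly the ``hands-on alternative'' you mention and then set aside: it takes the $d_1$-optimal plan $Q$ for $(\mu_1,\mu_2)$ and, for each pair $(x,y)$, an optimal plan $R^{x,y}$ for the pair $\bigl((P^t)^*(\delta_x),(P^t)^*(\delta_y)\bigr)$, forms the glued plan $S=\int R^{x,y}\,Q(dx,dy)$, verifies that its marginals are $(P^t)^*(\mu_1)$ and $(P^t)^*(\mu_2)$, and bounds its cost by $\int d_1\bigl((P^t)^*(\delta_x),(P^t)^*(\delta_y)\bigr)\,Q(dx,dy)\le\alpha\,d_1(\mu_1,\mu_2)$; the paper even carries this out for a general continuous cost $c$, so that stage of its argument is purely primal and does not use the duality theorem. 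Your dual route --- reading the Proposition, via \eqref{falt1}, as the statement that $\mathcal{L}^t$ maps $1$-Lipschitz functions to $\alpha$-Lipschitz functions, and then transporting test functions through the adjoint relation $\int\phi\,d(P^t)^*(\mu)=\int\mathcal{L}^t(\phi)\,d\mu$ --- reaches the same conclusion formally, and it has the concrete advantage you identify: the gluing construction needs $(x,y)\mapsto R^{x,y}$ to be a measurable selection of optimal plans for $S$ to be well defined, a point the paper passes over in silence (it can be repaired by a measurable-selection theorem, e.g.\ Corollary 5.22 in \cite{Vi2}). The trade-off is that your argument leans entirely on the Kantorovich duality \eqref{falt1}, which is harmless here since $\Omega$ is compact and the paper itself invokes duality both in \eqref{falt1} and inside the proof of the delta-level Proposition, whereas the paper's primal construction yields the intermediate inequality $d_1\bigl((P^t)^*(\mu_1),(P^t)^*(\mu_2)\bigr)\le\int d_1\bigl((P^t)^*(\delta_x),(P^t)^*(\delta_y)\bigr)\,dQ(dx,dy)$ for an arbitrary continuous cost, which is of independent interest.
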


{\bf Proof:} The main idea is to prove first the following claim: suppose  $Q$ is the $d_1$-optimal plan for $\mu_1$ and $\mu_2$, then,
$$ d_1\,((P^t)^* (\mu_1),(P^t)^* (\mu_2)) \leq \int \,d_1( (P^t)^*(\delta_x), (P^t)^* (\delta_y))\, d Q(dx, dy).$$

Suppose the plan in $\Omega\times \Omega$ denoted by $Q(dx, dy)$ has marginals $\mu_1$ and $\mu_2$ in respectively the first and second coordinates.

We will prove the result for a more general continuous potential $c$. Then, you just have to take $c=d$ in order to get the claim.

Given a continuous cost $c(z_1,z_2)$, $c:X \times X \to \mathbb{R}$,  we assume that $Q$ is $c$-optimal for $\mu_1$ and $\mu_2$. Now, given two points $x,y$ suppose
$R^{x,y} (dz_1,dz_2)$ is the $c$-optimal probability plan for $P^* (\delta_{x})$ and  $P^* (\delta_{y})$.

We denote $ S(d\,z_1,d\,z_2)$ the plan
$$S(dz_1,dz_2)\,=\,\int \int     R^{x,y} (dz_1,dz_2)\, Q( dx, dy).$$

We are going to show that the marginals of this plan are $\mu_1$ and $\mu_2$.

Indeed,

$$ \int \int\, \varphi(z_1) S( dz_1,dz_2)= $$
$$\int\int\int\int  \,\varphi(z_1) R^{ x,y} ( d z_1,d z_2)\, Q ( dx , dy) \,dx\, dy\, dz_1\,dz_2=$$
$$\int\int\int  \,\varphi(z_1) P^* (\delta_x)\, Q ( dx , dy) \,dx dy dz_1=$$
$$\int\,[\, P^*\,(\int\int  \,\varphi(z_1) \, Q (\, dx\, , dy)dy dz_1\,) \, ]\, (\delta_x) = $$
$$\int\,[ \,P^*\,(\int \,\varphi(.) \, Q (\, .\, , dy)dy \,) \, (\delta_x)\,]\, dx= \int\,\, \varphi (x) \,\,d P^* (\mu_1)\,\,( dx)   $$
because $P^*$ is linear on measures.

In this way the first marginal of $S( dz_1,dz_2)$ is $P^* \,(\mu_1)$.

In the same way one can prove that the second marginal  of $S( dz_1,dz_2)$ is $P^*\,\mu_2$.
 Now we consider $c(x,y)=d(x,y).$

From the above, we get
$$ d_1 (P^* \,(\mu_1), P^* \,(\mu_2))   \leq \int d(x,y)\, S(dx,dy),$$
where $S$ was defined from $Q$ which is the $d$-optimal plan for $\mu_1$ and $\mu_2$.
\bigskip

Therefore, from the above
$$ d_1\,(P^* (\mu_1),P^* (\mu_2)) \leq \int \,d_1( P^*(\delta_x), P^* (\delta_y))\, d Q(dx, dy),$$
where $Q$ is the $d_1$-optimal plan for $\mu_1$ and $\mu_2$.

In a similar way, given $t>0$ on can show the analogous result
$$ d_1\,((P^t)^* (\mu_1),(P^t)^* (\mu_2)) \leq \int \,d_1( (P^t)^*(\delta_x), (P^t)^* (\delta_y))\, d Q(dx, dy),$$
where $Q$ is the $d_1$-optimal plan for $\mu_1$ and $\mu_2$.

Therefore,
if  there exists a $t>0$ and  $\alpha<1$, such that, for any $x$ and $y$ we have
$$d_1( (P^t)^*(\delta_x) ,(P^t)^* (\delta_y)) \,\leq \,\alpha\, d(x,y),$$
then
$$ d_1((P^t)^* (\mu_1),(P^t)^* (\mu_2) \,\leq\, \alpha \,\,d_1(\mu_1,\mu_2).$$

This is so because
$$ d_1\,((P^t)^* (\mu_1),(P^t)^* (\mu_2)) \leq \int \,d_1( (P^t)^*(\delta_x), (P^t)^* (\delta_y))\, d Q(dx, dy)\leq$$
 $$\alpha \, \int \,d(x,y)\, d Q(dx, dy)=  \alpha \,\,d_1(\mu_1,\mu_2).$$\qed

\bigskip

Now we prove that
for any $x,y$
$$ d_1( (P^t)^*(\delta_x), (P^t)^*(\delta_y))\leq \alpha\, d (x,y).$$

Suppose $d_\theta(x,y)\leq \delta,$ where  $\delta< \frac{1-\alpha_1}{2\, C} $.

Remember that $d(x,y)= \min \{1,\delta^{-1} d_\theta(x,y)\}.$ In this case $d(x,y)= \delta^{-1} d_\theta(x,y).$

By Kantorovich duality  (see \cite{Vi1} and \cite{Vi2})
$$ d_1( \mu_1,\mu_2)\, = \sup_{\phi:X\to \mathbb{R}\,\,\text{has $d$\, Lipchitz constant}\, \leq 1} \,\{\, \int \phi \, d\,\mu_1 - \int \phi\, d\mu_2\,\}.$$

We have to show that: if $\phi$ has $d$-Lipchitz constant smaller than $1$, then, for such pair of $x,y$

$$|\mathcal{L}^t_{\log J} \phi(x) - \mathcal{L}^t_{\log J} \phi(y)|\leq \alpha d(x,y) = \alpha \delta^{-1} \, d_\theta (x,y).$$

We can assume without loss of generality that
$\phi$ attains the value $0$.

In this case $\sup_x  |\phi(x)|\leq 1$.

Moreover,
$$\sup \frac{\delta\,\,| \phi(x) - \phi(y)|  }{d_\theta(x,y)}\leq \sup \frac{| \phi(x) - \phi(y)|  }{d(x,y)}\leq 1. $$

Then,

$$  \,\frac{|\mathcal{L}^t_{\log J} (\phi) (x)- \mathcal{L}^t_{\log J} (\phi) (y)|}{d_\theta(x,y)} \leq $$
$$\, C\, \sup_x  |\phi(x)| + \alpha_1 \sup \frac{| \phi(x) - \phi(y)|  }{d_\theta(x,y)}\leq C + \alpha_1\, \delta^{-1} . $$

As $\delta< \frac{1-\alpha_1}{2\, C} $, then $C<\, \frac{1-\alpha_1}{2} \,\delta^{-1}.$

Therefore, from the above, we get that for any $x,y$ such that $d_\theta(x,y)\leq \delta,$ we have
$$ |\mathcal{L}^t_{\log J} (\phi) (x)- \mathcal{L}^t_{\log J} (\phi) (y)|\leq d_\theta(x,y)\,  (C + \alpha_1\, \delta^{-1})\leq$$
$$  d_\theta(x,y)\,  (\frac{1-\alpha_1}{2} \,\delta^{-1} + \alpha_1\,\delta^{-1} )=$$
$$d_\theta(x,y)\, \delta^{-1} \,( \frac{1 +\alpha_1}{2} )= d(x,y)\, \,( \frac{1 +\alpha_1}{2} )\leq d(x,y)\, \alpha,$$
because $\alpha =\max \{1- \frac{a}{2}, \frac{1}{2} (1 + \alpha_1)  \}.$
\bigskip

Now we suppose that
$x,y$ such that $d_\theta(x,y)> \delta.$ This implies that $d(x,y)=1$.

We denote
$$\Delta_\delta=\{(x',y')\in\, \Omega\times \Omega \,:\, d_\theta (x',y') \leq  \frac{1}{2} \delta   \,\}.$$

For such $\delta$ there exists $a>0$ and $T>0$, such that, for $k>T$, there exists a plan $\Gamma=\Gamma_{k}$ which satisfies
$\Gamma\in \mathcal{C} (\,(P^k)^*(\delta_x),\, (P^k)^*(\delta_y))$ and
$  \Gamma \,(\Delta_\delta)\,\geq a.$

Note that if $d_\theta (x',y') \leq  \frac{1}{2} \delta  $, then $d(x',y')\leq \frac{1}{2}.$

Therefore,
$$ \int d(x',y') \, \Gamma (dx',dy')\,\,\leq \frac{1}{2}\Gamma \,(\Delta_\delta)+ 1 - \Gamma \,(\Delta_\delta)= 1 -\frac{1}{2}\Gamma \,(\Delta_\delta)\leq 1 - \frac{a}{2},   $$
because $d(x',y')\leq 1$ in the complement of $\Delta_\delta$.

Then, if $d_\theta(x,y)> \delta$ we get
$$ d_1( (P^t)^*(\delta_x), (P^t)^*(\delta_y))\leq  \int d(x',y') \, \Gamma (dx',dy')\,\,\leq $$
$$(1 - \frac{a}{2}) = (1 - \frac{a}{2})\, \,d\,(x,y)\,\leq\,\alpha\,\, d(x,y)$$
because $\alpha =\max \{1- \frac{a}{2}, \frac{1}{2} (1 + \alpha_1)  \}.$

From all this, it follows the main result.

\bigskip

\end{document}